\newtheorem{theorem}{Theorem}[section]
\newtheorem{proposition}[theorem]{Proposition}
\newtheorem{corollary}[theorem]{Corollary}
\newtheorem{definition}[theorem]{Definition}
\newtheorem{lemma}[theorem]{Lemma}
\declaretheorem[style=remark,qed=$\Diamond$,Refname={Remark,Remarks}]{remark}
\declaretheorem[style=remark,qed=$\Diamond$,Refname={Example,Examples}]{example}
\DeclareMathOperator{\Id}{Id}
\DeclareMathOperator{\dom}{dom}
\DeclareMathOperator{\sign}{sign}
\DeclareMathOperator{\ran}{ran}
\DeclareMathOperator{\gra}{gra}
\DeclareMathOperator{\zer}{zer}
\DeclareMathOperator{\Fix}{Fix}
\DeclareMathOperator{\prox}{prox}
\DeclareMathOperator*{\argmin}{argmin}
\newcommand{\Hilbert}{\mathcal{H}}
\newcommand{\Hi}{\mathcal{H}}
\newcommand{\R}{\mathbb{R}}
\newcommand{\setto}{\rightrightarrows}
\newcommand{\wto}{\rightharpoonup}
\newcommand*\colvec[1]{\begin{pmatrix}#1\end{pmatrix}}
\title{A direct proof of convergence of Davis--Yin splitting algorithm allowing larger stepsizes}
\author{Francisco J.\ Arag\'on-Artacho\thanks{Department of Mathematics,
                             University of Alicante,
                             Alicante, \textsc{Spain}.
	                         Email:~\href{mailto:francisco.aragon@ua.es}
	                         {francisco.aragon@ua.es}}
          \and
          David Torregrosa-Bel\'en\thanks{Department of Mathematics,
                             University of Alicante,
                             Alicante, \textsc{Spain}.
	                         Email:~\href{mailto:david.torregrosa@ua.es}
	                         {david.torregrosa@ua.es}}}
\begin{document}
\maketitle

\begin{abstract}
This note is devoted to the splitting algorithm proposed by Davis and Yin in 2017
for computing a zero of the sum of three maximally monotone operators, with one of them being cocoercive. %
We provide a direct proof that guarantees its convergence when the stepsizes are smaller than four times the cocoercivity constant, thus doubling the size of the interval established by Davis and Yin. As a by-product, the same conclusion applies to the forward-backward splitting algorithm. Further, we use the notion of ``strengthening'' of a set-valued operator to derive a new splitting algorithm for computing the resolvent of the sum. Last but not least, we provide some numerical experiments illustrating the importance of appropriately choosing the stepsize and relaxation parameters of the algorithms.
\end{abstract}

\paragraph{Keywords.} monotone inclusion $\cdot$ resolvent $\cdot$ splitting algorithm  $\cdot$ forward-backward $\cdot$ strengthening
\paragraph{MSC2020.} 47H05 $\cdot$ 
                     90C30 $\cdot$ 
                     65K05 

\section{Introduction}

When a problem has certain structure, it is normally useful to take advantage of it. Following the divide-and-conquer paradigm, splitting algorithms iteratively solve simpler problems which are defined by separately using some parts of the original problem. A particular subfamily are projection methods (see, e.g.,~\cite[Chapter~5]{Cegielski}), which can be used to find a common point in the intersection of sets, based on projections of points defined in the iterations into each of the sets. These methods are usually variations of classical iterative schemes for finding fixed points of certain type of nonexpansive operators. Monotone operator theory~\cite{bauschke2017} permits to generalize these algorithms to tackle the far more general problem of finding a zero of the sum of maximally monotone operators by using their resolvents instead of the projectors (see Definitions~\ref{def:monotonicity} and~\ref{def:resolvent}).

There are many different splitting algorithms for computing a zero of the sum of two maximally monotone operators (see, e.g.,~\cite[Chapter~26]{bauschke2017}). Theoretically, one can always transform any splitting algorithm for computing zeros of the sum of two operators into a splitting algorithm for computing zeros of the sum of finitely many operators (see, e.g.,~\cite[Proposition 26.4]{bauschke2017}), thanks to Pierra's product space reformulation~\cite{Pierra}. Nevertheless, numerical experience shows that this theoretical trick usually slows down the resulting algorithm (see, e.g.,~\cite[Section~6.1]{franstrengthening}), especially when the number of operators is large (see, e.g.,~\cite[Section~4]{YairAviv} and~\cite[Section~5]{BorweinTam2014}). To alleviate this problem, various schemes requiring one space less in the product space have been recently proposed~\cite{Campoy21,TamEtAl21,malitsky2021resolvent}.

Only recently, three-operator splitting algorithms have been developed~\cite{davis2017three,patrinos17,rieger2020backward,ryu2019uniqueness,RyuVu2020}. This note is devoted to one of them, which was introduced by Damek Davis and Wotao Yin in~\cite{davis2017three}, and is commonly referred as \emph{Davis--Yin splitting algorithm}. The algorithm is designed for solving the problem
\begin{equation}\label{eq:problem}
\text{find~}x \text{ such that } 0 \in (A+B+T)(x),
\end{equation}
where all three operators involved are maximally monotone and act on a Hilbert space, and $T$ is also cocoercive (see Definition~\ref{def:cocoercive}).
Davis and Yin defined the operator
\begin{equation}\label{eq:DYoperator}
DY_\gamma:=J_{\gamma B}\circ \left(2J_{\gamma A}-\Id-\gamma T\circ J_{\gamma A}\right)+\Id-J_{\gamma A},
\end{equation}
where $J_{\gamma A}$ and $J_{\gamma B}$ denote the corresponding resolvents, and proved that $DY_\gamma$ is $\alpha$-\emph{averaged} for $\alpha=\frac{2\beta}{4\beta-\gamma}$ when $\gamma\in{]0,2\beta[}$, where $\beta>0$ is the cocoercivity constant of~$T$. Then, they defined their splitting algorithm through the standard Krasnosel'ski\u{i}--Mann iteration
\begin{equation}\label{eq:DY0}
x_{k+1}=(1-\lambda_k)x_k+\lambda_k DY_\gamma(x_k),\quad k=0,1,2,\ldots,
\end{equation}
with $\lambda_k\in{]0,1/\alpha[}$ satisfying the assumptions of~\cite[Proposition~5.16]{bauschke2017}, from which its convergence to a fixed point $x$ of $DY_\gamma$ follows. Further, the \emph{shadow sequence} $(J_{\gamma A}(x_k))_{k\in\mathbb{N}}$ weakly converges to a solution to~\eqref{eq:problem}, and convergence is strong under additional assumptions. Three well-known splitting algorithms can be obtained as a particular instance of Davis--Yin's, namely the Douglas--Rachford~\cite{LionsMercier} (when $T=0$), the forward-backward~\cite{LionsMercier,Passty} (when $A=0$) and the backward-forward~\cite{Attouch18} (when $B=0$).

In this note we provide a direct proof of the convergence of the iterative method~\eqref{eq:DY0} without relying on the averagedness of the operator $DY_\gamma$ (see Theorem~\ref{t:GradientDR}). Our proof has two key advantages: (i) it permits to simplify the assumptions on the relaxation parameters, and (ii) it allows to choose the stepsize $\gamma$ in $]0,4\beta[$ instead of $]0,2\beta[$. Observe that the operator $DY_\gamma$ does not need to be averaged when $\gamma>2\beta$ (for instance, take $A=B=0$, $T$ the identity, and apply $DY_\gamma$ to the points $x=1$ and $z=-1$). As a by-product, this shows that the stepsize in the forward-backward and the backward-forward algorithms can be also chosen in $]0,4\beta[$. In addition, we derive in Theorem~\ref{t:DYsplitting} a \emph{strengthened} version of Davis--Yin splitting algorithm which permits computing the resolvent of $A+B+T$.

Right before submitting this manuscript, we learnt about the recent preprint~\cite{DaoPhan21}. Using the notion of \emph{conically averaged} operators introduced in~\cite{BartzDaoPhan20}, the authors prove in~\cite[Corollary~4.2]{DaoPhan21} that the operator $(1-\lambda)\Id+\lambda DY_\gamma$ is $2\lambda\beta/(4\beta-\gamma)$-averaged when $\gamma\in{]0,4\beta[}$, from which the convergence of~\eqref{eq:DY0} for a fixed $\lambda_k=\lambda$ follows.

As a simple motivating example of the importance of the algorithm parameters, consider the problem of finding the minimum norm point in the intersection of two balls $\mathbb{A}$ and $\mathbb{B}$ in the Euclidean space whose intersection has nonempty interior. The problem can be solved with Davis--Yin splitting algorithm, taking $A$ and $B$ as the normal cones to the respective balls, and $T$ as the identity mapping. Since the resolvents of the normal cones are the projectors (see Example~\ref{ex:2}), which we denote by $P_{\mathbb{A}}$ and $P_{\mathbb{B}}$, the iterative scheme is given by
$$x_{k+1}=x_k-\lambda_k P_\mathbb{A}(x_k)+\lambda_k P_\mathbb{B}\left((2-\gamma)P_\mathbb{A}(x_k)-x_k\right),\quad k=0,1,2,\ldots,$$
and $\left(P_{\mathbb{A}}(x_k)\right)_{k\in\mathbb{N}}$ converges to the minimum norm point in $\mathbb{A}\cap\mathbb{B}$ (the normal cone sum rule holds).
Both the \emph{relaxation parameter} $\lambda_k$ and the \emph{stepsize} $\gamma$ have a big influence on the behavior of the algorithm, as shown in Figure~\ref{fig:1}.
\begin{figure}[ht!]\centering
\includegraphics[width=.49\textwidth]{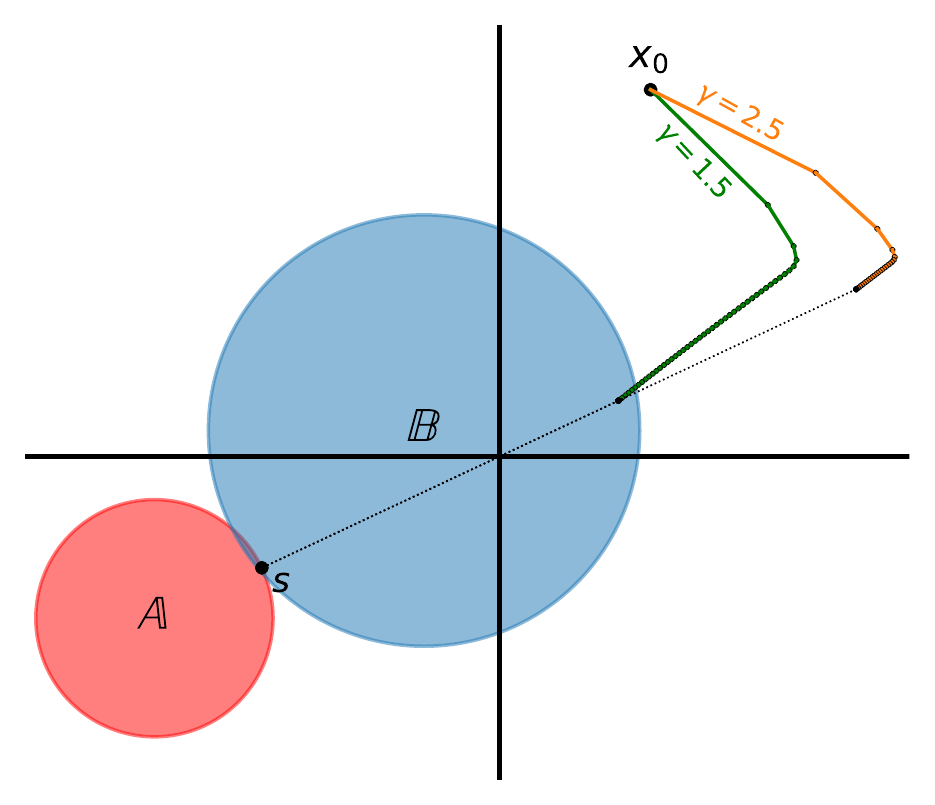}
\includegraphics[width=.49\textwidth]{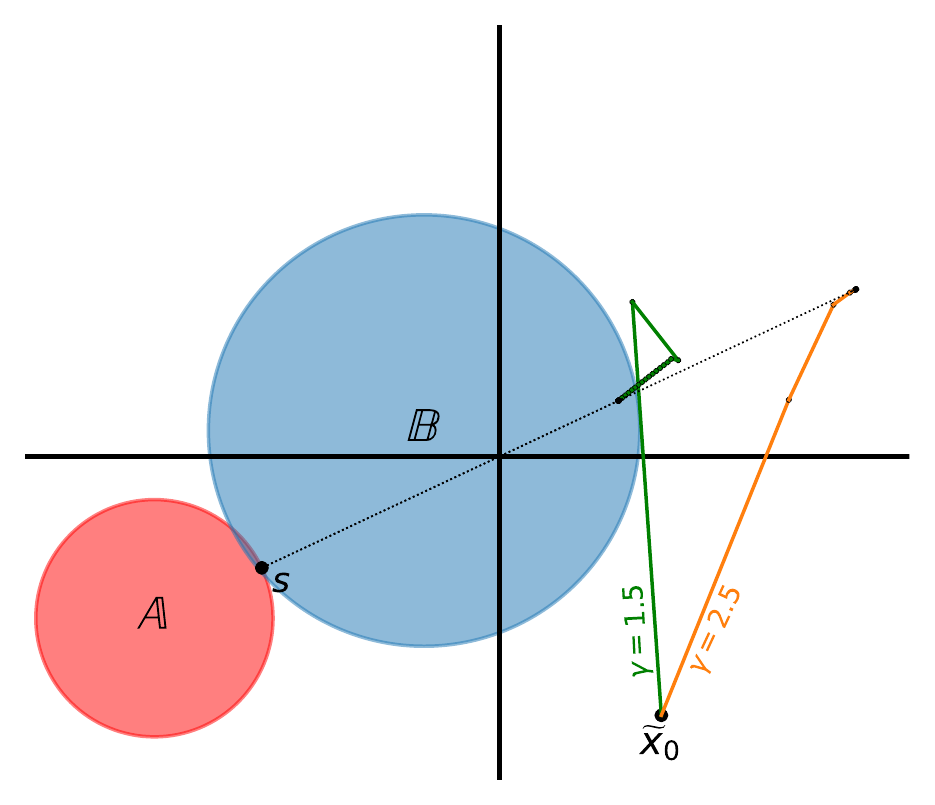}
\caption{Behavior of Davis--Yin splitting algorithm for two starting points $x_0$ and $\widetilde{x}_0$ and two stepsize parameters $\gamma$, with $\lambda_k=0.99(2-\gamma/2)$. The solution $s$ is obtained after projecting the fixed point onto $\mathbb{A}$.}\label{fig:1}
\end{figure}
In this example, since the cocoercivity constant $\beta$ is equal to $1$, \cite[Theorem~2.1]{davis2017three} guarantees the convergence when the parameter $\gamma$ is taken in $]0,2[$, while Theorem~\ref{t:GradientDR} allows to take $\gamma\in{]0,4[}$. When the Davis--Yin splitting algorithm is applied to the same problem with different starting points $x_0$, it can behave very differently depending on the parameters, as shown in Figures~\ref{fig:1} and~\ref{fig:2}.
\begin{figure}[ht!]\centering
\includegraphics[width=.49\textwidth]{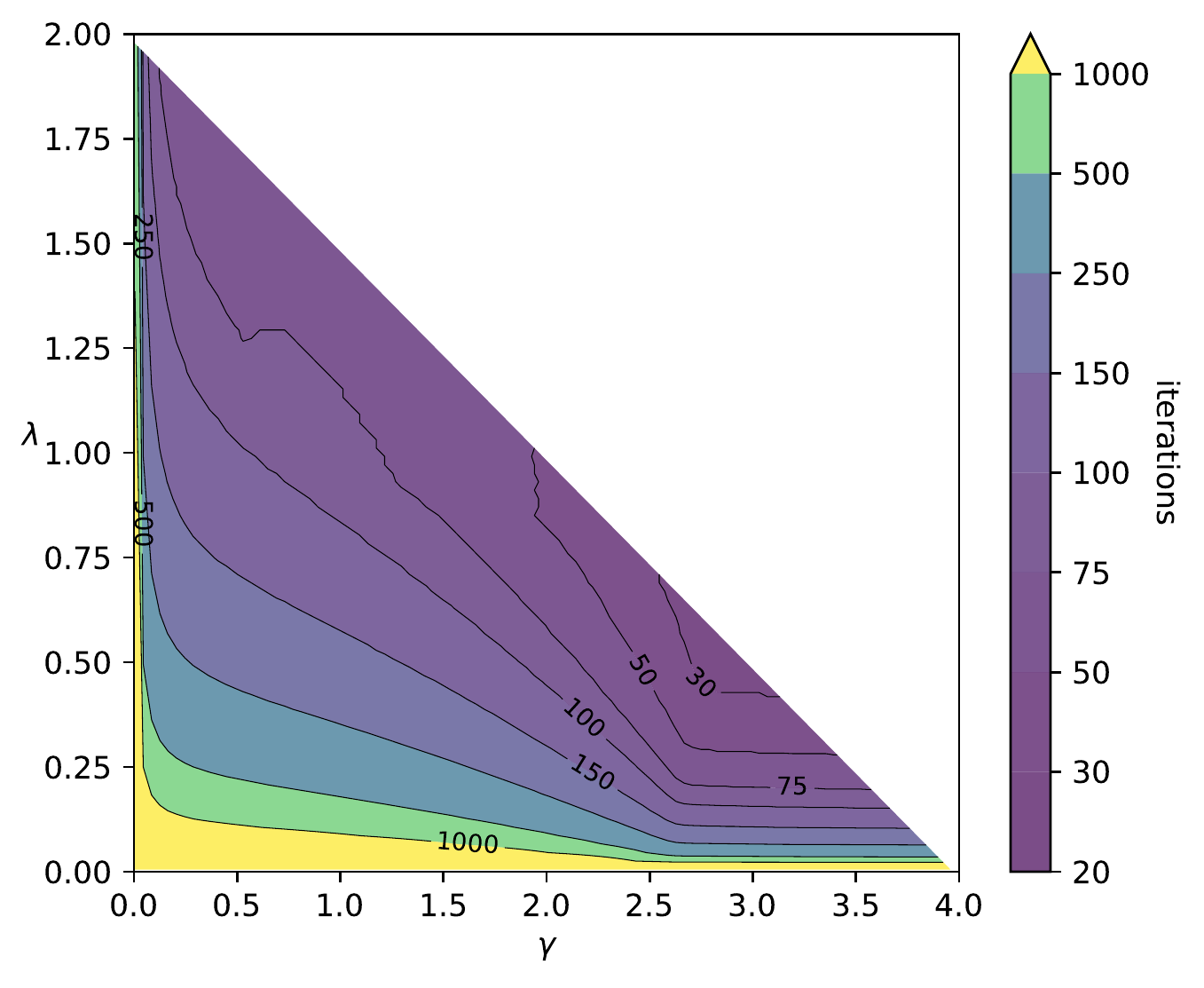}
\includegraphics[width=.49\textwidth]{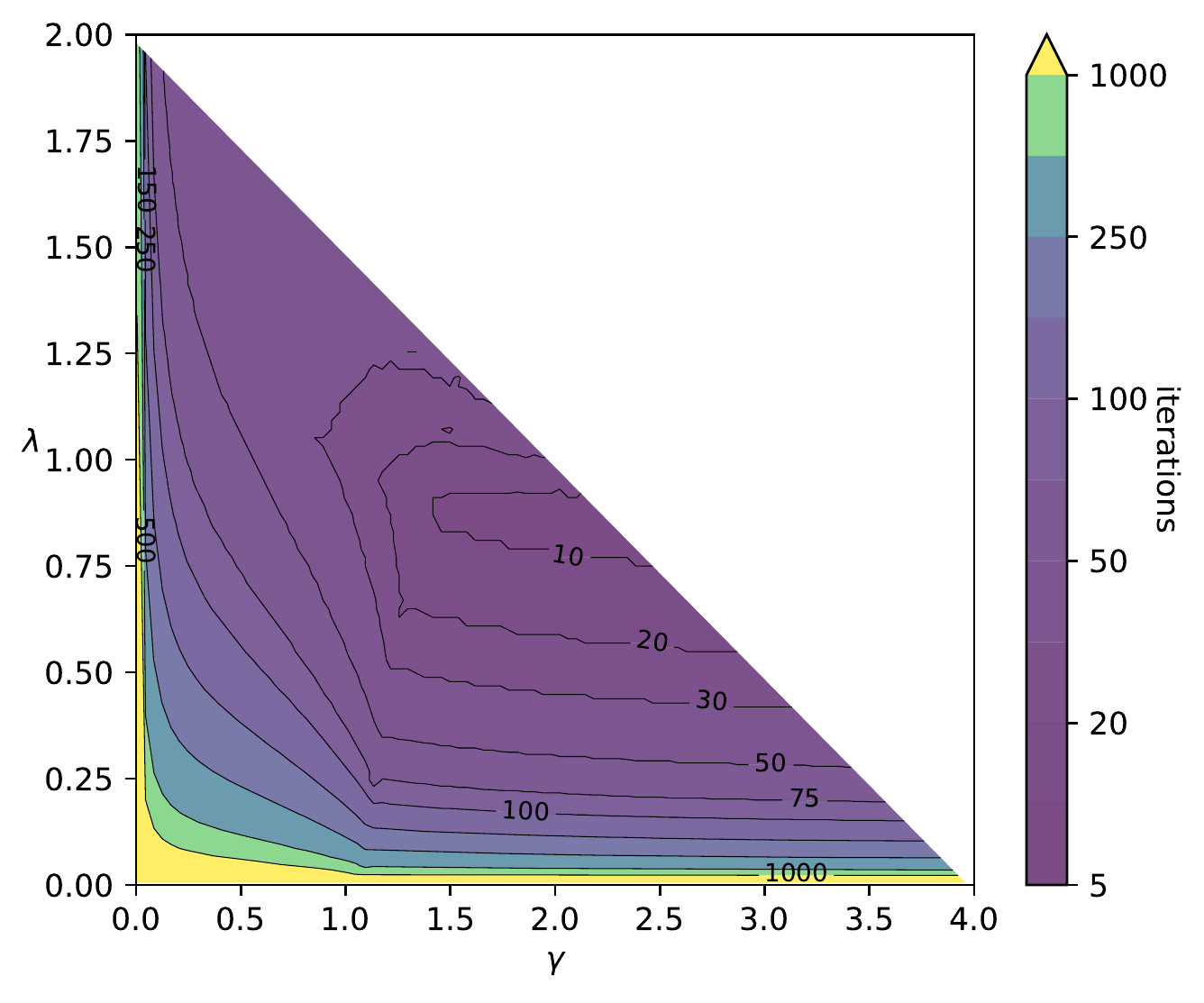}
\caption{Number of iterations needed until the shadow sequence gets sufficiently close to the solution $s$ (precisely, $\|P_A(x_k)-s\|<10^{-10}$) for different values of $\gamma$ and $\lambda_k=\lambda$, with starting points $x_0$ (left) and $\widetilde{x}_0$ (right) shown in Figure~\ref{fig:1}.}\label{fig:2}
\end{figure}

In general, larger stepsizes are commonly believed to be associated with faster convergence of algorithms, but this is not always the case, particularly when an algorithm has several parameters. It is important to have in mind that the relaxation parameter $\lambda_k$ of the Davis--Yin splitting algorithm is upper bounded by $2-\frac{\gamma}{2\beta}$ and that its value has an important effect. If $\gamma\in{]0,2\beta[}$, \emph{overrelaxed steps} (i.e., $\lambda_k>1$) are allowed in~\eqref{eq:DY0}, while only \emph{underrelaxed steps} can be taken when $\gamma\geq 2\beta$. The fact that both the stepsize and the relaxation parameters are important is especially apparent when one considers the particular case of $A=B=0$ and $T=\nabla f$ for a differentiable function $f$ whose gradient is Lipschitz continuous with constant $L=\frac{1}{\beta}$. In this case, the iteration~\eqref{eq:DY0} reduces to the gradient descent scheme:
\begin{equation}\label{eq:gradient_descent}
x_{k+1}=x_k-\gamma\lambda_k\nabla f(x_k),\quad k=0,1,2,\ldots.
\end{equation}
We observe in~\eqref{eq:gradient_descent} that the stepsize of the algorithm is actually $\gamma\lambda_k$, so the upper bound $2-\frac{\gamma}{2\beta}$ on the relaxation parameters $\lambda_k$ entails $\gamma\lambda_k<2\beta=\frac{2}{L}$, as expected.

Finally, it is important to recall that in practical applications only a lower bound of the best cocoercivity constant~$\beta$ is usually known, and this can affect the performance of the algorithms. For instance, consider again the application of the Davis--Yin algorithm with starting point $\widetilde{x}_0$ shown on the right in Figure~\ref{fig:2} and imagine that we underestimate $\beta$ to $\widehat{\beta}=0.65<1=\beta$. Then, we observe in Figure~\ref{fig:6} how the choice of a stepsize parameter $\gamma\in{]0,2\widehat{\beta}[}$ excludes better values like $\widehat{\gamma}\in{]2\widehat{\beta},4\widehat{\beta}[}$.
A typical choice for the parameters of the forward-backward algorithm is $\gamma=(2-\varepsilon)\beta$ and $\lambda_k=1$, for a small $\varepsilon>0$ (see, e.g.,~\cite{Condat2020}).
This example shows that, when only an estimate $\widehat{\beta}$ of the best value of $\beta$ is known, it can be worth testing the performance of the algorithm
with parameters $\gamma=(2+\varepsilon)\widehat{\beta}$ and $\lambda_k=1-\varepsilon$ (i.e., with underrelaxation).

\begin{figure}[ht!]\centering
\includegraphics[width=.6\textwidth]{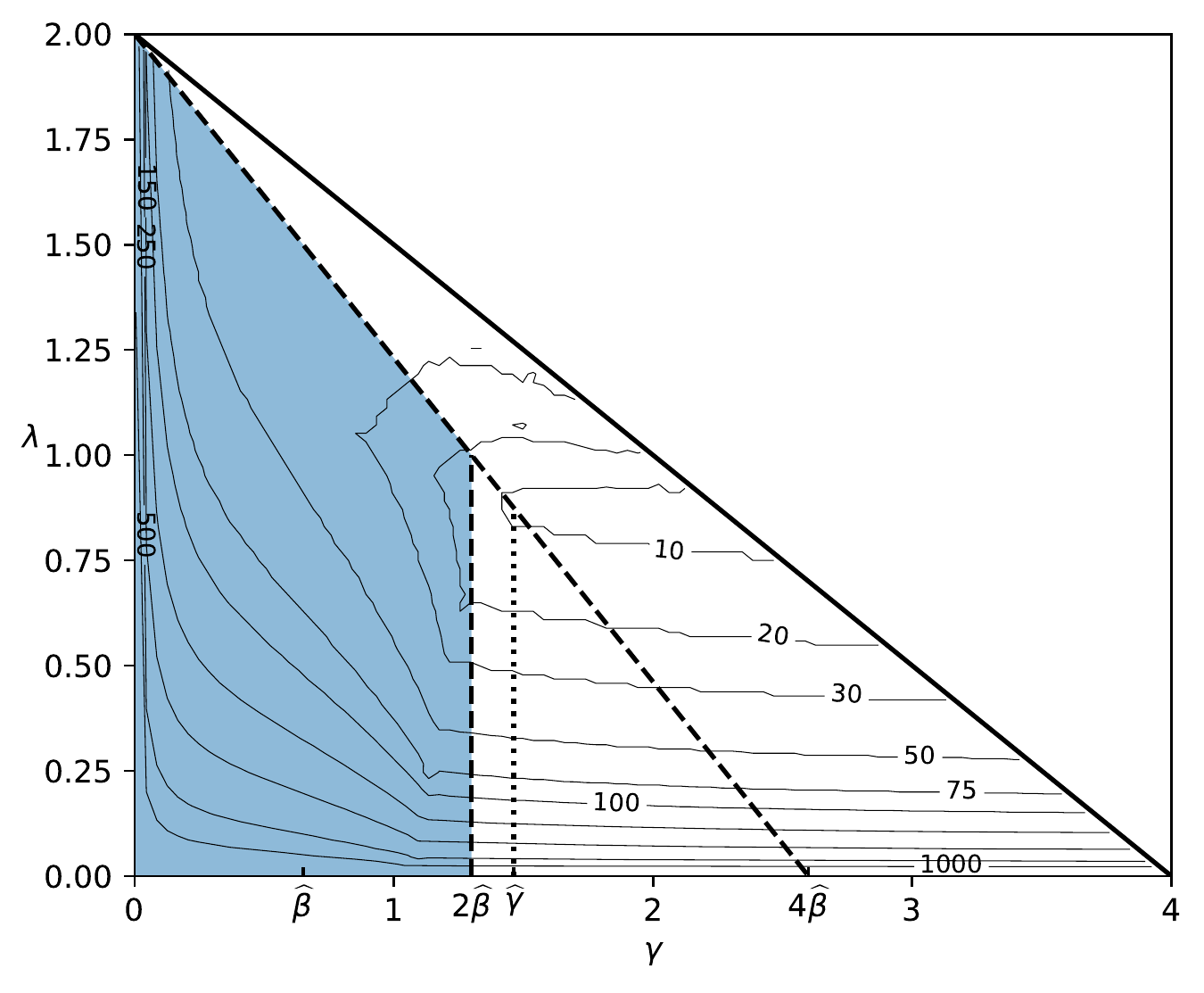}
\caption{Repetition of the experiment shown on the right of Figure~\ref{fig:2}. When only an approximate value $\widehat{\beta}$ of the cocoercivity constant is known, choosing the stepsize~$\gamma\in{]0,2\widehat{\beta}[}$ (shaded area) can exclude better choices like $\widehat{\gamma}$.}\label{fig:6}
\end{figure}

The remainder of this paper is structured as follows. In Section~\ref{s:preliminaries} we recall some preliminary notions and results. In Section~\ref{sec:3} we provide an alternative proof of convergence of the Davis--Yin splitting algorithm and derive its \emph{strengthened} version for computing the resolvent of the sum. In Section~\ref{sec:4} we include some illustrative numerical experiments. We finish with some conclusions in Section~\ref{sec:5}.

\section{Preliminaries}\label{s:preliminaries}
Throughout this paper, $\Hilbert$ is a real Hilbert space equipped with inner product $\langle\cdot , \cdot\rangle$ and induced norm $\|\cdot\|$. We abbreviate \emph{norm convergence} of sequences in $\Hilbert$ with $\to$  and we use~$\rightharpoonup$ for \emph{weak convergence}.

A \emph{set-valued operator} is a mapping $A:\Hi\setto\Hilbert$ that assigns to each point in $\Hi$ a subset of $\Hilbert$, i.e., $A(x)\subseteq \Hilbert$ for all $x\in \Hi$. In the case when $A$ always maps to singletons, i.e., $A(x)=\{u\}$ for all $x\in \Hi$, $A$ is said to be a \emph{single-valued mapping} and is denoted by $A:\Hi\to\Hilbert$. In an abuse of notation, we may write $A(x)=u$ when $A(x)=\{u\}$.  The \emph{domain}, the \emph{range}, the \emph{graph}, the set of \emph{fixed points} and the set of \emph{zeros} of $A$, are denoted, respectively, by $\dom A$, $\ran A$, $\gra A$, $\Fix A$ and $\zer A$;~i.e.,
\begin{gather*}
\begin{align*}
\dom A&:=\left\{x\in\Hilbert : A(x)\neq\emptyset\right\},&
\ran A&:=\left\{u\in\Hilbert : \exists x\in\Hilbert: u\in A(x) \right\},&\\
\gra A&:=\left\{(x,u)\in\Hilbert\times\Hilbert : u\in A(x)\right\},&
\Fix A&:=\left\{x\in\Hilbert : x\in A(x)\right\},
\end{align*}\\
\text{and} \quad \zer A:=\left\{x\in\Hilbert : 0\in A(x)\right\}.
\end{gather*}
The \emph{inverse operator} of $A$, denoted by $A^{-1}$, is defined through
$x\in A^{-1}(u) \iff u\in A(x)$. The \emph{identity operator} is denoted by $\Id$.

\begin{definition}\label{def:cocoercive}
We say that an operator $T:\Hi \to \Hilbert$ is
\begin{enumerate}[(i)]
\item\emph{$L$-Lipschitz continuous} for $L >0$ if
\begin{equation*}
\|T(x)-T(y)\| \leq L \|x-y\| \quad \forall x,y \in \Hi;
\end{equation*}
\item\emph{$\beta$-cocoercive} for $\beta >0$ if
\begin{equation*}
\langle x-y, T(x)-T(y) \rangle \geq \beta \|T(x)- T(y)\|^2 \quad \forall x,y \in \Hi.
\end{equation*}
\end{enumerate}
\end{definition}
Note that, by the  Cauchy--Schwarz inequality, any $\beta$-cocoercive mapping is $\frac{1}{\beta}$-Lipschitz continuous. When the operator is the gradient of a convex function, the Baillon--Haddad theorem states that both notions are equivalent, see \cite[Corolaire~10]{BaillonHaddad}.
\begin{definition}\label{def:monotonicity}
Let $A:\Hilbert\setto\Hilbert$ be a set-valued operator.
\begin{enumerate}[(i)]
\item A is said to be \emph{$\eta$-monotone} for $\eta\in\mathbb{R}$ if
$$ \langle x-y,u-v\rangle \geq \eta\|x-y\|^2\quad\forall(x,u),(y,v)\in\gra A. $$
Furthermore, an $\eta$-monotone operator $A$ is said to be \emph{maximally $\eta$-monotone} if there exists no $\eta$-monotone operator $B\colon\Hilbert\setto\Hilbert$ such that $\gra B$ properly contains $\gra A$.
\item $A$ is said to be \emph{uniformly monotone} with modulus $\phi : \mathbb{R}_+ \to [0, +\infty [$ if $\phi$ is increasing, vanishes only at $0$, and
$$ \langle x-y, u-v\rangle \geq \phi(\|x-y\|) \quad \forall (x,u),(y,v) \in \gra A.$$
\end{enumerate}
\end{definition}
An operator is monotone (in the classical sense) if it is $0$-monotone and it is $\eta$-strongly monotone (in the classical sense) if it is $\eta$-monotone for $\eta>0$, in which case it is uniformly monotone with modulus $\phi(t)=\eta t^2$, for $t\in\R_+$.

\begin{definition}\label{def:demiregularity }
We say that an operator $T:\Hilbert \to \Hilbert$ is \emph{demiregular} at $x \in \Hi$ if for all sequences $(x_k)_{k\in\mathbb{N}}$ with $x_k \wto x$ and $T(x_k) \to T(x)$, we have $x_k \to x$.
\end{definition}

The resolvent operator, whose definition is given next, is one of the main building blocks of splitting algorithms.

\begin{definition}\label{def:resolvent}
Given an operator $A\colon\Hilbert\setto\Hilbert$, the \emph{resolvent} of $A$ with parameter $\gamma>0$ is the operator $J_{\gamma A}\colon\Hilbert\setto\Hilbert$ defined by $J_{\gamma A}:=(\Id+\gamma A)^{-1}$.
\end{definition}

The following result is a consequence of Minty's theorem~\cite{Minty}.

\begin{proposition}[Resolvents of $\eta$-monotone operators]\label{prop:Jalpha}
Let $A:\Hilbert\setto\Hilbert$ be $\eta$-monotone and let $\gamma>0$ such that $1+\gamma\eta>0$. Then
\begin{enumerate}[(i)]
\item $J_{\gamma A}$ is single-valued,
\item $\dom J_{\gamma A}=\Hilbert$ if and only if $A$ is maximally $\eta$-monotone.
\end{enumerate}
\end{proposition}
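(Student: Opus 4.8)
The plan is to reduce both assertions to the classical, purely monotone case and then quote Minty's theorem. The device is the shifted operator $B:=A-\eta\Id$, whose graph is $\gra B=\{(x,u-\eta x):(x,u)\in\gra A\}$. Since $\langle x-y,(u-\eta x)-(v-\eta y)\rangle=\langle x-y,u-v\rangle-\eta\|x-y\|^2$ for all $(x,u),(y,v)\in\gra A$, the operator $B$ is monotone if and only if $A$ is $\eta$-monotone. Moreover, the affine bijection $(x,u)\mapsto(x,u-\eta x)$ of $\Hilbert\times\Hilbert$ carries $\gra A$ onto $\gra B$ and preserves graph inclusion, so it restricts to an order isomorphism between the family of $\eta$-monotone operators and the family of monotone operators; in particular, $A$ is maximally $\eta$-monotone if and only if $B$ is maximally monotone.

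Next I would express the resolvent of $A$ through $B$. Using $1+\gamma\eta>0$, write $\Id+\gamma A=(1+\gamma\eta)\Id+\gamma B=(1+\gamma\eta)\bigl(\Id+\gamma' B\bigr)$, where $\gamma':=\gamma/(1+\gamma\eta)>0$. Inverting this identity of set-valued operators yields $J_{\gamma A}(x)=J_{\gamma' B}\bigl(x/(1+\gamma\eta)\bigr)$ for every $x\in\Hilbert$. Consequently, $J_{\gamma A}$ is single-valued exactly when $J_{\gamma' B}$ is, and $\dom J_{\gamma A}=(1+\gamma\eta)\dom J_{\gamma' B}$, so $\dom J_{\gamma A}=\Hilbert$ exactly when $\dom J_{\gamma' B}=\Hilbert$.

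It then suffices to establish (i) and (ii) for the monotone operator $\gamma' B$. For (i): if $z,z'\in J_{\gamma' B}(y)$, then $(y-z)/\gamma'\in B(z)$ and $(y-z')/\gamma'\in B(z')$, and monotonicity of $B$ gives $-\|z-z'\|^2/\gamma'\ge 0$, hence $z=z'$. (Equivalently, (i) can be proved straight from the $\eta$-monotonicity of $A$ by the same one-line computation, the hypothesis $1+\gamma\eta>0$ being precisely what makes the resulting inequality force $z=z'$.) For (ii): $\dom J_{\gamma' B}=\ran(\Id+\gamma' B)$, and Minty's theorem applied to $\gamma' B$ states precisely that $\ran(\Id+\gamma' B)=\Hilbert$ if and only if $\gamma' B$ is maximally monotone, which holds if and only if $B$ is maximally monotone. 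Chaining this with the two previous paragraphs gives $\dom J_{\gamma A}=\Hilbert\iff B\text{ maximally monotone}\iff A\text{ maximally }\eta\text{-monotone}$.

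The one step that requires a little care is the maximality transfer in the first paragraph: one must check that ``$A$ admits no proper $\eta$-monotone extension'' is genuinely equivalent to ``$B$ admits no proper monotone extension'', which is exactly what the recognition of the map on graphs as an inclusion-preserving bijection between the two classes delivers. Everything else is elementary resolvent algebra together with a black-box application of Minty's theorem.
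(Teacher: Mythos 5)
Your proof is correct and complete. Note that the paper does not actually prove this proposition: it simply cites \cite[Proposition~3.4]{dao2019adaptive}, so there is no in-paper argument to compare against. Your reduction is the standard one and is essentially what that reference does: shift to $B:=A-\eta\Id$, observe that the affine graph bijection $(x,u)\mapsto(x,u-\eta x)$ preserves inclusions and exchanges $\eta$-monotone with monotone operators (hence transfers maximality, the one point you rightly flag as needing care), factor $\Id+\gamma A=(1+\gamma\eta)\bigl(\Id+\gamma' B\bigr)$ with $\gamma'=\gamma/(1+\gamma\eta)>0$ to get $J_{\gamma A}(x)=J_{\gamma' B}\bigl(x/(1+\gamma\eta)\bigr)$, and then invoke Minty's theorem for the monotone operator $\gamma'B$. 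All the individual computations check out, including the direct one-line verification of single-valuedness from $\eta$-monotonicity, where $1+\gamma\eta>0$ is exactly the condition that turns $0\ge\frac{1+\gamma\eta}{\gamma}\|z-z'\|^2$ into $z=z'$. The only cosmetic remark is that your argument establishes a slightly stronger fact worth stating explicitly, namely the resolvent identity $J_{\gamma A}=J_{\gamma' B}\circ\frac{1}{1+\gamma\eta}\Id$ itself, which is useful elsewhere (e.g.\ it is the mechanism behind the rewriting of the strengthened resolvents in the proof of Theorem~\ref{t:DYsplitting}).
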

\begin{proof}
See~\cite[Proposition 3.4]{dao2019adaptive}.
\end{proof}
\begin{example}\label{ex:prox}
Let $f: \Hilbert \to ]-\infty, +\infty]$ be a proper, lower semicontinuous (lsc)
and convex function. Then, the \emph{subdifferential} of $f$, which is the operator $\partial f:\Hilbert \setto \Hilbert$ defined as
$$ \partial f(x) = \{u \in \Hilbert : f(x) + \langle u, y-x \rangle \leq f(y), \; \forall y \in \Hilbert\},$$
is a maximally monotone operator. Furthermore, it holds that $J_{\gamma \partial f} = \prox_{\gamma f}: \Hilbert \setto \Hilbert$, where $\prox_{\gamma f}$ is the \emph{proximity operator} of $f$ (with parameter $\gamma$) defined at $x \in \Hilbert$ by
$$ \prox_{\gamma f}(x) := \argmin_{u \in \Hilbert} \left( f(u)+ \frac{1}{2\gamma}\|x-u\|^2\right),$$
see, e.g.,~\cite[Theorem~20.25 \& Example~23.3]{bauschke2017}. Some functions are \emph{prox-friendly}, which means that their proximity operator is easy to compute, see~\cite{CombettesPesquet} for various examples. This is the case for the $\ell_1$-norm, whose proximity operator is the result of applying the \emph{soft thresholding} function:
$$\prox_{\gamma \|\cdot\|_1}(x)=\sign(x)\odot[|x|-\gamma]_{+},$$
where $\odot$ denotes element-wise product and $[\,\cdot\,]_{+}$ and $|\cdot|$ are applied element-wise. That is, its $i$-th component is given by
$$
\prox_{\gamma \|\cdot\|_1}(x)_i
=
\left\{ \begin{array}{ll}
x_i + \gamma, & \text{if} \; x_i <  -\gamma , \\
0, & \text{if} \; |x_i| \leq \gamma, \\
x_i - \gamma, &  \text{if} \; x_i > \gamma,
\end{array}
\right.
$$
for  $i=1,2,\ldots,n$.
\end{example}
\begin{example}\label{ex:2}
Given a nonempty set $C \subseteq \Hilbert$, the  \emph{indicator function} of $C$, $\iota_C: \Hilbert \to {]-\infty, \infty]}$, is defined as
$$ \iota_C(x):=
\left\{ \begin{array}{ll}
0, & \text{if} \; x \in C; \\
+\infty, & \text{if} \; x \notin C.
\end{array}
\right. $$
When $C$ is a convex set, $\iota_C$ is a convex function whose subdifferential becomes the \emph{normal cone} to $C$, $N_C: \Hilbert \to \Hilbert$, given by
$$ \partial \iota_C(x) = N_C(x) :=
\left\{\begin{array}{ll}
\{u \in \Hilbert: \langle u, c-x\rangle \leq 0, \; \forall c \in C \}, & \text{if} \; x \in C, \\
\emptyset, & \text{otherwise}.
\end{array}
\right.
$$
When $C$ is nonempty, closed and convex, the normal cone $N_C$ is maximally monotone. Furthermore, $J_{N_C} = P_C$, where $P_C:\Hilbert \to \Hilbert$ denotes the \emph{projector} onto $C$, which is defined at $x\in\Hilbert$ by
$$ P_C(x) := \argmin_{c \in C} \|x-c\|, $$
see, e.g.,~\cite[Example~20.26 \& Example~23.4]{bauschke2017}.
\end{example}
Fej\'er monotonicity is a key property in fixed point theory (see, e.g,~\cite[Chapter~5]{bauschke2017}). It will allow us to derive weak convergence of the sequence generated by the Davis--Yin splitting algorithm.
\begin{definition}\label{def:Fejer}
Let $C$ be a nonempty subset of $\Hilbert$ and let $(x_n)_{n\in\mathbb{N}}$ be a sequence in $\Hilbert$. Then $(x_n)_{n\in\mathbb{N}}$ is \emph{Fej\'er monotone} with respect to $C$ if for all $x \in C$
$$ \|x_{n+1}-x\| \leq \|x_n - x\| \quad \forall n \in \mathbb{N}. $$
\end{definition}
\begin{proposition}\label{p:Fejer}
Let $C$ be a nonempty subset of $\Hilbert$ and let $(x_n)_{n\in\mathbb{N}}$ be a sequence in $\Hilbert$. Suppose that $(x_n)_{n\in\mathbb{N}}$ is Fej\'er monotone with respect to $C$ and that every weak sequential cluster point of $(x_n)_{n\in\mathbb{N}}$ belongs to $C$. Then $(x_n)_{n\in\mathbb{N}}$ converges weakly to a point in $C$.
\end{proposition}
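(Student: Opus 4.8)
The plan is to follow the classical three-step route: first establish that $(x_n)_{n\in\mathbb{N}}$ is bounded, then invoke reflexivity of $\Hilbert$ to secure the existence of a weak sequential cluster point, and finally prove that such a cluster point is unique, so that the entire sequence converges weakly to it.

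First I would fix an arbitrary $x\in C$, which is possible since $C\neq\emptyset$. Fej\'er monotonicity says that $(\|x_n-x\|)_{n\in\mathbb{N}}$ is nonincreasing; being bounded below by $0$, it is convergent. In particular $(x_n)_{n\in\mathbb{N}}$ is bounded, and since $\Hilbert$ is a Hilbert space (hence reflexive), the sequence admits at least one weak sequential cluster point. By hypothesis, every such cluster point lies in $C$, so it only remains to show there is exactly one.

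The key step is this uniqueness argument, and it is the only place where Fej\'er monotonicity is genuinely exploited. Suppose $x_{n_k}\wto u$ and $x_{m_j}\wto v$ along two subsequences, so that $u,v\in C$. Expanding norms, for every $n$,
$$\|x_n-u\|^2-\|x_n-v\|^2=2\langle x_n, v-u\rangle+\|u\|^2-\|v\|^2.$$
Applying the first paragraph with $x=u$ and then with $x=v$, both terms on the left-hand side converge as $n\to\infty$, hence $\langle x_n, v-u\rangle$ converges to some $\ell\in\mathbb{R}$. Passing to the limit along the subsequence $x_{n_k}\wto u$ gives $\langle u, v-u\rangle=\ell$, and passing to the limit along $x_{m_j}\wto v$ gives $\langle v, v-u\rangle=\ell$; subtracting yields $\|v-u\|^2=0$, i.e.\ $u=v$.

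Finally, I would conclude that a bounded sequence with a unique weak sequential cluster point $\bar{x}$ converges weakly to $\bar{x}$: if not, there would be a weakly open neighbourhood $U$ of $\bar{x}$ and a subsequence of $(x_n)_{n\in\mathbb{N}}$ lying outside $U$; this subsequence is bounded, hence has a further subsequence converging weakly to some point, which would then be a weak sequential cluster point of $(x_n)_{n\in\mathbb{N}}$ distinct from $\bar{x}$, a contradiction. Since $\bar{x}\in C$ by hypothesis, this completes the argument. I do not anticipate any serious obstacle; the only mildly delicate point is the uniqueness step, which is handled by the norm-difference identity above.
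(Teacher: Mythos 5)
Your proof is correct. The paper does not argue this statement itself but simply cites \cite[Theorem~5.5]{bauschke2017}, and your argument is precisely the standard one found there: Fej\'er monotonicity gives boundedness and convergence of $\|x_n-x\|$ for every $x\in C$, the norm-difference identity $\|x_n-u\|^2-\|x_n-v\|^2=2\langle x_n,v-u\rangle+\|u\|^2-\|v\|^2$ forces any two weak sequential cluster points $u,v\in C$ to coincide, and the routine subsequence extraction upgrades uniqueness of the cluster point to weak convergence of the whole sequence. All steps check out, including the slightly delicate point that the limit of the further subsequence stays outside the weakly open neighbourhood $U$ because its complement is weakly sequentially closed.
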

\begin{proof}
See~\cite[Theorem 5.5]{bauschke2017}.
\end{proof}

\section{Davis--Yin splitting algorithm}\label{sec:3}

Let $A, B : \Hilbert \setto \Hilbert$ be two maximally monotone operators and let $T:\Hilbert \to \Hilbert$ be cocoercive. Consider the problem
\begin{equation}\label{eq:Problem3Op}
\text{find~}x\in \Hilbert \text{ such that } 0 \in (A+B+T)(x).
\end{equation}
The following lemma characterizes the set of zeros of the latter sum of operators in terms of the set
\begin{equation}\label{eq:Omega}
\Omega_\gamma := \bigl\{ x \in \Hilbert: J_{\gamma A}(x) = J_{\gamma B}\bigl(2 J_{\gamma A}(x)-x-\gamma {T} (J_{\gamma A}(x))\bigr) \bigr\},
\end{equation}
with $\gamma >0$, and shows that $\Omega_\gamma=\Fix DY_\gamma$, where
\begin{equation}
\label{e:FixDY}
\Fix DY_\gamma=\left\{u+\gamma y: u\in\zer (A+B+T), y\in \left(-B(u)-T(u)\right) \cap A(u)\right\},
\end{equation}
as shown in~\cite[Lemma~2.2]{davis2017three}.
\begin{lemma}\label{l:solP3Op}
For every $\gamma>0$, it holds
$$\zer(A+B+T) = J_{\gamma A} (\Omega_\gamma).$$
In particular, $\zer \left(A+B+T\right)\neq\emptyset\iff\Omega_\gamma\neq\emptyset$. Further, $\Omega_\gamma=\Fix DY_\gamma$.
\end{lemma}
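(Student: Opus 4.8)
The plan is to establish the chain of equalities $\zer(A+B+T) = J_{\gamma A}(\Omega_\gamma)$ and $\Omega_\gamma = \Fix DY_\gamma$ by unwinding the definitions of the resolvents. The core computation is the following equivalence: for $x \in \Hilbert$ and $u := J_{\gamma A}(x)$ (which is well-defined and single-valued by Proposition~\ref{prop:Jalpha}, since $A$ is maximally monotone), I claim that $x \in \Omega_\gamma$ if and only if $0 \in (A+B+T)(u)$. To see this, note first that $u = J_{\gamma A}(x)$ means $x - u \in \gamma A(u)$, i.e. $\frac{1}{\gamma}(x-u) \in A(u)$. Now set $w := 2u - x - \gamma T(u)$. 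The condition $x \in \Omega_\gamma$ reads $u = J_{\gamma B}(w)$, which is equivalent to $w - u \in \gamma B(u)$, i.e. $\frac{1}{\gamma}(w-u) \in B(u)$. Since $w - u = u - x - \gamma T(u)$, this says $\frac{1}{\gamma}(u-x) - T(u) \in B(u)$. Adding the inclusion $\frac{1}{\gamma}(x-u) \in A(u)$ and the identity $T(u) \in T(u)$ gives $0 \in A(u) + B(u) + T(u)$; conversely, if $0 \in (A+B+T)(u)$ with $u = J_{\gamma A}(x)$, then $\frac{1}{\gamma}(u-x) - T(u) = -\frac{1}{\gamma}(x-u) - T(u) \in -A(u) - T(u) \ni 0 - \text{(something in } A(u)+T(u))$, and since $0$ is in $A(u)+B(u)+T(u)$, the element $\frac{1}{\gamma}(u-x)-T(u)$ lies in $B(u)$, i.e. $u = J_{\gamma B}(w)$, so $x \in \Omega_\gamma$.

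From this equivalence both assertions follow. For the first: if $z \in \zer(A+B+T)$, pick any $y \in (-B(z)-T(z)) \cap A(z)$ (nonempty precisely because $0 \in A(z)+B(z)+T(z)$) and set $x := z + \gamma y$; then $x - z = \gamma y \in \gamma A(z)$, so $z = J_{\gamma A}(x)$, and by the equivalence $x \in \Omega_\gamma$, whence $z = J_{\gamma A}(x) \in J_{\gamma A}(\Omega_\gamma)$. Conversely, any point of $J_{\gamma A}(\Omega_\gamma)$ is $u = J_{\gamma A}(x)$ for some $x \in \Omega_\gamma$, and the equivalence gives $0 \in (A+B+T)(u)$, so $u \in \zer(A+B+T)$. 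The "in particular" statement is immediate since $J_{\gamma A}$ maps $\Hilbert$ into $\Hilbert$, so $J_{\gamma A}(\Omega_\gamma) = \emptyset$ exactly when $\Omega_\gamma = \emptyset$.

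For $\Omega_\gamma = \Fix DY_\gamma$: unfolding~\eqref{eq:DYoperator}, $x \in \Fix DY_\gamma$ means $x = J_{\gamma B}(2J_{\gamma A}(x) - x - \gamma T(J_{\gamma A}(x))) + x - J_{\gamma A}(x)$, which simplifies directly to $J_{\gamma A}(x) = J_{\gamma B}(2J_{\gamma A}(x) - x - \gamma T(J_{\gamma A}(x)))$, i.e. exactly the defining condition of $\Omega_\gamma$ in~\eqref{eq:Omega}. (The representation~\eqref{e:FixDY} of $\Fix DY_\gamma$ is not needed for this equality; it can either be cited from~\cite[Lemma~2.2]{davis2017three} or re-derived from the same computation by tracking $u$ and $y := \frac{1}{\gamma}(x-u)$.) I do not anticipate a genuine obstacle here; the only point requiring care is to invoke Proposition~\ref{prop:Jalpha} to guarantee that $J_{\gamma A}$ and $J_{\gamma B}$ are everywhere-defined single-valued maps (so that all the expressions above are meaningful), and to handle the set-valued additions cleanly when passing between "$0 \in A(u)+B(u)+T(u)$" and the individual membership statements.
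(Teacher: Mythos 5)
Your ``core equivalence'' --- that for a fixed $x$ with $u := J_{\gamma A}(x)$ one has $x \in \Omega_\gamma$ if and only if $0 \in (A+B+T)(u)$ --- is false in the ``if'' direction, and the sentence where you argue that direction is exactly where the error sits. From $0 \in A(u)+B(u)+T(u)$ you only get the existence of \emph{some} $a \in A(u)$ and $b \in B(u)$ with $a+b+T(u)=0$; there is no reason why this witness $a$ should coincide with the particular element $\frac{1}{\gamma}(x-u)\in A(u)$ supplied by $u=J_{\gamma A}(x)$, so you cannot conclude that $\frac{1}{\gamma}(u-x)-T(u)$ lies in $B(u)$. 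A concrete counterexample in $\Hilbert=\R$: take $A=N_{\{0\}}$, $B=0$ and $T=\Id$. Then $J_{\gamma A}\equiv 0$ and $0\in(A+B+T)(0)=\R$, so your equivalence would give $\Omega_\gamma=\R$; but the definition~\eqref{eq:Omega} yields $\Omega_\gamma=\{x: 0=-x\}=\{0\}$. The correct statement, which is what the paper proves, keeps the existential quantifier in place: $u\in\zer(A+B+T)$ iff \emph{there exists} $x$ with $u=J_{\gamma A}(x)$ and $u=J_{\gamma B}(2u-x-\gamma T(u))$, such $x$ being precisely the points $u+\gamma y$ with $y\in A(u)\cap(-B(u)-T(u))$.

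The damage is local. The forward implication of your equivalence is valid, and it is all you need for $J_{\gamma A}(\Omega_\gamma)\subseteq\zer(A+B+T)$. For the reverse inclusion your construction $x:=z+\gamma y$ with $y\in(-B(z)-T(z))\cap A(z)$ is exactly the right one, but you must justify $x\in\Omega_\gamma$ directly --- from $y\in -B(z)-T(z)$ you get $z-x-\gamma T(z)=-\gamma y-\gamma T(z)\in\gamma B(z)$, hence $z=J_{\gamma B}(2z-x-\gamma T(z))$ --- rather than by appealing to the false half of the equivalence. With that repair the argument goes through, and your direct verification of $\Omega_\gamma=\Fix DY_\gamma$ by unfolding~\eqref{eq:DYoperator} is correct and slightly more self-contained than the paper's route through~\eqref{e:FixDY}.
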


\begin{proof}
Observe that
\begin{equation*}
\begin{aligned}
u  \in \zer\left(A+B+T\right) & \Leftrightarrow -\gamma{T}(u) \in (\gamma A+ \gamma B)(u) \\
& \Leftrightarrow (\exists \, x \in \Hilbert) \quad x-u \in \gamma A(u),  \quad u-x-\gamma{T}(u) \in \gamma B(u) \\
& \Leftrightarrow (\exists \, x \in \Hilbert) \quad u = J_{\gamma A}(x), \quad 2u-x-\gamma {T}(u) \in (\Id + \gamma B)(u) \\
& \Leftrightarrow (\exists \, x \in \Hilbert) \quad u  = J_{\gamma A}(x), \quad u = J_{\gamma B}(2u-x-\gamma {T}(u)),
\end{aligned}
\end{equation*}
from where the first claim follows. Further, we have
\begin{equation*}
\begin{aligned}
x \in \Omega_\gamma & \Leftrightarrow (\exists u \in \zer \left(A+B+T\right)) \quad  u = J_{\gamma A}(x), \quad u = J_{\gamma B}(2u - x - \gamma T(u) ) \\
& \Leftrightarrow  (\exists u \in \zer \left(A+B+T\right)) \quad x-u \in \gamma A(u), \quad x-u \in (-\gamma B(u)-\gamma T(u)) \\
& \Leftrightarrow (\exists u \in \zer \left(A+B+T\right),\;\exists y \in  \left(-B(u) - T(u)\right) \cap A(u)), \quad x = u + \gamma y,
\end{aligned}
\end{equation*}
and thus, $\Omega_\gamma=\Fix DY_\gamma$, by~\eqref{e:FixDY}.
\end{proof}

Using a technique similar to the one employed in~\cite[Theorem~8]{franstrengthening}, we can provide a direct proof of the convergence of Davis--Yin splitting algorithm with the additional advantages of both allowing a larger stepsize and having a simpler condition on the relaxation parameters than~\cite[Theorem~2.1]{davis2017three}. The proof makes use of the following technical lemma.

\begin{lemma}\label{lemma2}
Let $A,B: \Hilbert \setto \Hilbert$ be two maximally monotone operators and ${T}:\Hilbert \to \Hilbert$. Let $x,\hat{x}\in\Hilbert$ and $\gamma>0$, and set $u:=J_{\gamma A}(x)$, $\hat{u}:=J_{\gamma_A}(\hat{x})$, $v:=J_{\gamma B}(2u-x-\gamma T(u))$ and $\hat{v}:=J_{\gamma B}(2\hat{u}-\hat{x}-\gamma T(\hat{u}))$. Then, it holds
\begin{equation}\label{e:lemma2_0}
0\leq \langle x-\hat{x},(u-v)-(\hat{u}-\hat{v})\rangle-\|(u-v)-(\hat{u}-\hat{v})\|^2-\gamma\langle T(u)-T(\hat{u}),v-\hat{v}\rangle.
\end{equation}
Further, if $A$ (respectively $B$) is uniformly monotone with modulus $\phi$, then~\eqref{e:lemma2_0} holds with $0$ replaced by $\gamma\phi(\|u-\hat{u}\|)$ (respectively $\gamma\phi(\|v-\hat{v}\|)$).
\end{lemma}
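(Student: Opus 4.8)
The plan is to exploit the fact that resolvents of maximally monotone operators are firmly nonexpansive, or equivalently, that the inclusions defining $u,v,\hat u,\hat v$ encode monotonicity of $A$ and $B$. First I would write out what it means that $u = J_{\gamma A}(x)$ and $\hat u = J_{\gamma A}(\hat x)$: by definition, $x - u \in \gamma A(u)$ and $\hat x - \hat u \in \gamma A(\hat u)$. Applying $\eta$-monotonicity of $A$ (with $\eta = 0$ in the merely monotone case, or $\eta$ replaced appropriately by the modulus $\phi$ in the uniformly monotone case) gives
\begin{equation*}
\langle (x-u)-(\hat x - \hat u),\, u - \hat u\rangle \ \geq\ 0
\end{equation*}
(respectively $\geq \gamma\phi(\|u-\hat u\|)$). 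Similarly, $v = J_{\gamma B}(2u - x - \gamma T(u))$ means $(2u - x - \gamma T(u)) - v \in \gamma B(v)$, and likewise for $\hat v$, so monotonicity of $B$ yields
\begin{equation*}
\langle \bigl((2u - x - \gamma T(u)) - v\bigr) - \bigl((2\hat u - \hat x - \gamma T(\hat u)) - \hat v\bigr),\, v - \hat v\rangle \ \geq\ 0
\end{equation*}
(respectively $\geq \gamma\phi(\|v - \hat v\|)$).

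Next I would add these two inequalities. The key bookkeeping step is to introduce the shorthand $d := (u-v) - (\hat u - \hat v)$ and $e := x - \hat x$, and rewrite everything in terms of $d$, $e$, $u - \hat u$, $v - \hat v$, and $T(u) - T(\hat u)$. Note that $u - \hat u = d + (v - \hat v)$, and the arguments of the second inner product rearrange as $2(u-\hat u) - (x - \hat x) - \gamma(T(u)-T(\hat u)) - (v - \hat v) = (u - \hat u) + d - e - \gamma(T(u)-T(\hat u))$... I should be careful here: $2(u-\hat u) - (v-\hat v) = (u - \hat u) + \bigl((u-\hat u)-(v-\hat v)\bigr) = (u-\hat u) + d$. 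So the second bracket is $(u - \hat u) + d - e - \gamma(T(u)-T(\hat u))$, paired against $v - \hat v = (u - \hat u) - d$. Meanwhile the first inequality reads $\langle e - (u - \hat u),\, u - \hat u\rangle \geq 0$. Expanding both, summing, and cancelling the symmetric terms (the $\langle e, u-\hat u\rangle$ and $\|u-\hat u\|^2$ contributions should combine so that $u - \hat u$ is eliminated in favour of $d$ and $v - \hat v$, or better, I expand in terms of $d$ and $e$ directly) should collapse to exactly the right-hand side of~\eqref{e:lemma2_0}.

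The main obstacle — really the only nontrivial part — is the algebra of combining the two inner-product inequalities and verifying that all cross terms cancel to leave precisely $\langle e, d\rangle - \|d\|^2 - \gamma\langle T(u)-T(\hat u), v - \hat v\rangle$. Concretely, after substituting $u - \hat u = d + (v-\hat v)$ everywhere, the first inequality becomes $\langle e, d + (v-\hat v)\rangle - \|d + (v-\hat v)\|^2 \geq 0$ and the second becomes $\langle d - \gamma(T(u)-T(\hat u)) + (v - \hat v),\, (v-\hat v)\rangle \geq 0$ after the $2(u-\hat u)-(v-\hat v) = (u-\hat u)+d$ simplification and using $v - \hat v = (u-\hat u) - d$ once more; wait, I need to recompute: with $u - \hat u = d + (v - \hat v)$ the second bracket is $(d + (v-\hat v)) + d - e - \gamma(T(u)-T(\hat u)) = 2d + (v - \hat v) - e - \gamma(T(u)-T(\hat u))$, paired with $v - \hat v$. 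Adding the two inequalities, the $\langle e, v-\hat v\rangle$ terms cancel, the $\|v-\hat v\|^2$ terms cancel against $-\|d+(v-\hat v)\|^2$ partially, and one is left with $\langle e, d\rangle - \|d\|^2 - \gamma\langle T(u)-T(\hat u), v-\hat v\rangle \geq 0$, which is exactly~\eqref{e:lemma2_0}. For the uniformly monotone refinement, since only one of the two base inequalities gets the extra $\gamma\phi(\cdot)$ term and that term survives the summation untouched, the conclusion follows immediately. I would present the base case in full and remark that the uniformly monotone case is identical with the obvious modification.
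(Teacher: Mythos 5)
Your proposal is correct and follows essentially the same route as the paper: both proofs extract the inclusions $x-u\in\gamma A(u)$ and $2u-x-\gamma T(u)-v\in\gamma B(v)$ (and their hatted counterparts), apply monotonicity of $A$ and $B$ to get two inner-product inequalities, and sum them; your substitution $d=(u-v)-(\hat u-\hat v)$, $e=x-\hat x$ is just a notational repackaging of the paper's direct cancellation, and your final algebra checks out. The treatment of the uniformly monotone refinement (the extra $\gamma\phi(\cdot)$ term passing through the summation) also matches the paper.
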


\begin{proof}
Since $x-u\in\gamma A(u)$ and $\hat{x}-\hat{u}\in\gamma A(\hat{u})$, monotonicity of $\gamma A$ yields
\begin{equation}\label{e:lemma2_1}
0\leq \langle (x-u)-(\hat{x}-\hat{u}),u-\hat{u}\rangle.
\end{equation}
Likewise, since $2u-x-\gamma T(u)-v\in\gamma B(v)$ and $2\hat{u}-\hat{x}-\gamma T(\hat{u})-\hat{v}\in\gamma B(\hat{v})$, monotonicity of $\gamma B$ implies
\begin{equation}\label{e:lemma2_2}
\begin{aligned}
0&\leq \langle (2u-x-\gamma T(u)-v)-(2\hat{u}-\hat{x}-\gamma T(\hat{u})-\hat{v}),v-\hat{v}\rangle\\
&=\langle (\hat{v}-\hat{u})-(v-u),v-\hat{v}\rangle-\langle(x-u)-(\hat{x}-\hat{u}),v-\hat{v}\rangle -\gamma\langle T(u)-T(\hat{u}),v-\hat{v}\rangle.
\end{aligned}
\end{equation}
Summing together~\eqref{e:lemma2_1} and~\eqref{e:lemma2_2}, we obtain
\begin{align*}
0&\leq\langle(x-u)-(\hat{x}-\hat{u}),(u-v)-(\hat{u}-\hat{v})\rangle+\langle(\hat{v}-\hat{u})-(v-u),v-\hat{v}\rangle-\gamma\langle T(u)-T(\hat{u}),v-\hat{v}\rangle\\
&=\langle x-\hat{x},(u-v)-(\hat{u}-\hat{v})\rangle-\|(u-v)-(\hat{u}-\hat{v})\|^2-\gamma\langle T(u)-T(\hat{u}),v-\hat{v}\rangle,
\end{align*}
which proves~\eqref{e:lemma2_0}. The last assertion easily follows from the definition of uniform monotonicity.
\end{proof}

\begin{theorem}[{Davis--Yin splitting}]\label{t:GradientDR}
Let $A,B: \Hilbert \setto \Hilbert$ be two maximally monotone operators and ${T}:\Hilbert \to \Hilbert$ be a $\beta$-cocoercive operator, with $\beta >0$, such that $\zer\left(A+B+T\right)  \neq \emptyset$.  Set a stepsize $\gamma\in{]0,4\beta[}$ and consider a sequence of relaxation parameters $(\lambda_k)_{k\in\mathbb{N}}$ in $]0,2-\gamma/(2\beta)]$ such that $\sum_{k\in\mathbb{N}}\lambda_k\left(2-\frac{\gamma}{2\beta}-\lambda_k\right)=+\infty$. Given some initial point $x_0 \in \Hilbert$, consider the sequences defined by
\begin{equation}\label{eq:GradientDR}
\left\{\begin{aligned}
  u_k &= J_{\gamma A}(x_k) \\
  v_k &= J_{\gamma B}(2 u_k -x_k - \gamma {T}(u_k)) \\
  x_{k+1} &= x_k + \lambda_k (v_k-u_k).
\end{aligned}\right.
\end{equation}
Then, the sequence $(x_k)_{k\in\mathbb{N}}$ is Fej\'er monotone with respect to the set $\Omega_\gamma$ given in~\eqref{eq:Omega}.
Moreover, the following assertions hold:
\begin{enumerate}[(i)]
\item\label{it:GradientDR-ii}  $x_k \wto \bar{x} \in \Omega_\gamma$, $u_k \wto \bar{u}$, $v_k \wto \bar{u}$, $v_k-u_k\to 0$ and $T(u_k)\to T(\bar{u})$ with
\begin{equation}\label{eq:wconvergence}
\bar{u} = J_{\gamma A}(\bar{x}) = J_{\gamma B}(2 \bar{u}- \bar{x} - \gamma {T} (\bar{u})) \in \zer \left(A+B+T\right).
\end{equation}%
Further, $T\left(\zer(A+B+T)\right)=\{T(\bar{u})\}$.
\item\label{it:GradientDR-iii} If either $A$ or $B$ is uniformly monotone on every bounded subset of its domain, or $T$ is demiregular at every point  in $\zer\left(A+B+T\right)$, then  $(u_k)_{k\in\mathbb{N}}$ and $(v_k)_{k\in\mathbb{N}}$ converge strongly to $\bar{u} \in \zer\left(A+B+T\right)$.
\end{enumerate}
\end{theorem}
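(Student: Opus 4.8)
The plan is to run a standard Fejér-monotonicity / Krasnosel'skiĭ–Mann argument, but with the ``energy'' estimate coming directly from Lemma~\ref{lemma2} rather than from averagedness of $DY_\gamma$. Fix $x\in\Omega_\gamma$ and set $\bar u:=J_{\gamma A}(x)$, so that by Lemma~\ref{l:solP3Op} we have $\bar u\in\zer(A+B+T)$ and $\bar u=J_{\gamma B}(2\bar u-x-\gamma T(\bar u))$; in particular, in the notation of Lemma~\ref{lemma2} with $\hat x:=x$ we get $\hat u=\hat v=\bar u$, hence $(\hat u-\hat v)=0$. Applying Lemma~\ref{lemma2} with this $\hat x$ and the iterate $x_k$ (so $u=u_k$, $v=v_k$) gives
\[
0\le \langle x_k-x,\,u_k-v_k\rangle-\|u_k-v_k\|^2-\gamma\langle T(u_k)-T(\bar u),\,v_k-\bar u\rangle .
\]
To handle the last term I would write $v_k-\bar u=(v_k-u_k)+(u_k-\bar u)$ and use $\beta$-cocoercivity of $T$ on the pair $(u_k,\bar u)$, namely $\langle T(u_k)-T(\bar u),u_k-\bar u\rangle\ge\beta\|T(u_k)-T(\bar u)\|^2$, together with a Cauchy–Schwarz / Young split of the cross term $\gamma\langle T(u_k)-T(\bar u),v_k-u_k\rangle$ tuned so that the $\|T(u_k)-T(\bar u)\|^2$ contributions cancel against $\beta\|T(u_k)-T(\bar u)\|^2$. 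This is the crucial computation, and it is exactly where the constant $4\beta$ (rather than $2\beta$) appears: choosing the Young parameter optimally leaves a term $-\tfrac{\gamma}{4\beta}\|v_k-u_k\|^2$ absorbed into $-\|u_k-v_k\|^2$, so that
\[
\langle x_k-x,\,v_k-u_k\rangle\ \ge\ \Bigl(1-\tfrac{\gamma}{4\beta}\Bigr)\|v_k-u_k\|^2 .
\]

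Next I would plug the update $x_{k+1}=x_k+\lambda_k(v_k-u_k)$ into the identity
\[
\|x_{k+1}-x\|^2=\|x_k-x\|^2+2\lambda_k\langle x_k-x,\,v_k-u_k\rangle+\lambda_k^2\|v_k-u_k\|^2,
\]
but with the sign of the inner-product term: using $\langle x_k-x,v_k-u_k\rangle = -\langle x_k-x,u_k-v_k\rangle$ and the inequality just derived (rewritten as $\langle x_k - x, v_k-u_k\rangle \le -(1-\tfrac{\gamma}{4\beta})\|v_k-u_k\|^2$), one obtains
\[
\|x_{k+1}-x\|^2\ \le\ \|x_k-x\|^2-\lambda_k\Bigl(2-\tfrac{\gamma}{2\beta}-\lambda_k\Bigr)\|v_k-u_k\|^2 .
\]
Since $\gamma<4\beta$ and $\lambda_k\in{]0,2-\gamma/(2\beta)]}$, the coefficient $\lambda_k(2-\gamma/(2\beta)-\lambda_k)$ is nonnegative, which gives Fejér monotonicity of $(x_k)_{k\in\mathbb{N}}$ with respect to $\Omega_\gamma$ and, telescoping, $\sum_k \lambda_k(2-\gamma/(2\beta)-\lambda_k)\|v_k-u_k\|^2<\infty$. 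Combined with the hypothesis $\sum_k\lambda_k(2-\gamma/(2\beta)-\lambda_k)=+\infty$ this forces $\liminf_k\|v_k-u_k\|=0$, and in fact (since the partial sums of the coefficients diverge while the sum of coefficients-times-$\|v_k-u_k\|^2$ converges, and $(\|x_k-x\|)$ is nonincreasing hence $(x_k)$ bounded) one gets $v_k-u_k\to 0$.

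For part~\eqref{it:GradientDR-ii}, boundedness of $(x_k)$ gives a weak cluster point; along a subsequence $x_{k_j}\wto\bar x$. Boundedness of $(u_k)$ follows from nonexpansiveness of $J_{\gamma A}$, and then I would show $T(u_k)$ converges strongly: from the telescoped estimate one also extracts (re-examining the Young split so that a $\|T(u_k)-T(\bar u)\|^2$ term survives with a strictly positive coefficient when $\gamma<4\beta$) that $\sum_k\lambda_k(2-\gamma/(2\beta)-\lambda_k)\|T(u_k)-T(\bar u)\|^2<\infty$ for $x\in\Omega_\gamma$, hence $T(u_k)\to T(\bar u)$ — here $\bar u:=J_{\gamma A}(x)$ for the fixed $x$, but one must check the limit is independent of the choice of $x\in\Omega_\gamma$, which is precisely the assertion $T(\zer(A+B+T))=\{T(\bar u)\}$ and follows by applying Lemma~\ref{lemma2} to two elements of $\Omega_\gamma$. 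To identify the weak limit as a point of $\Omega_\gamma$: along the subsequence, $x_{k_j}\wto\bar x$, $u_{k_j}=J_{\gamma A}(x_{k_j})$, $v_{k_j}-u_{k_j}\to0$ so $u_{k_j}\wto\bar u$ and $v_{k_j}\wto\bar u$ for some $\bar u$ (weak limit of $u_{k_j}$), and $T(u_{k_j})\to T(\bar u)$. Passing to the limit in the defining inclusions $x_{k_j}-u_{k_j}\in\gamma A(u_{k_j})$ and $2u_{k_j}-x_{k_j}-\gamma T(u_{k_j})-v_{k_j}\in\gamma B(v_{k_j})$, using weak-strong sequential closure of the graphs of the maximally monotone operators $A$ and $B$ (their graphs are demiclosed), yields $\bar x-\bar u\in\gamma A(\bar u)$ and $\bar u-\bar x-\gamma T(\bar u)\in\gamma B(\bar u)$, i.e.\ $\bar x\in\Omega_\gamma$ and $\bar u\in\zer(A+B+T)$ with \eqref{eq:wconvergence}. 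Fejér monotonicity with respect to $\Omega_\gamma$ together with Proposition~\ref{p:Fejer} then upgrades the subsequential weak convergence to $x_k\wto\bar x\in\Omega_\gamma$; and since the whole sequence converges, $u_k\wto\bar u$, $v_k\wto\bar u$. Finally, for part~\eqref{it:GradientDR-iii}: if $T$ is demiregular at $\bar u\in\zer(A+B+T)$, then $u_k\wto\bar u$ and $T(u_k)\to T(\bar u)$ give $u_k\to\bar u$ directly, and hence $v_k\to\bar u$; if instead $A$ (say) is uniformly monotone on bounded sets, I would rerun the Lemma~\ref{lemma2}/telescoping estimate keeping the extra nonnegative term $\gamma\phi(\|u_k-\bar u\|)$ on the left, so that the summability conclusion forces $\phi(\|u_k-\bar u\|)\to0$ along the relevant subsequence and hence $u_k\to\bar u$ (the bounded-set hypothesis is what lets us use the modulus, since $(u_k)$ is bounded), again giving $v_k\to\bar u$.

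\medskip
\noindent\textbf{Main obstacle.} The genuinely delicate point is the Young-inequality bookkeeping in the cocoercivity step: one must split $\gamma\langle T(u_k)-T(\bar u),v_k-u_k\rangle$ with a free parameter and choose it so that (a) the $\|T(u_k)-T(\bar u)\|^2$ terms cancel against the $\beta$-cocoercivity term, (b) what remains on the $\|v_k-u_k\|^2$ side produces exactly the coefficient $1-\gamma/(4\beta)$ — which is what both widens the stepsize range to $]0,4\beta[$ and pins down the relaxation bound $2-\gamma/(2\beta)$ — and, for the strong-convergence refinement, (c) enough of the $\|T(u_k)-T(\bar u)\|^2$ term is kept with a positive coefficient to conclude $T(u_k)\to T(\bar u)$. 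Everything else is routine Fejér-monotonicity machinery (Proposition~\ref{p:Fejer}) plus demiclosedness of maximally monotone graphs.
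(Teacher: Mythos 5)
Your derivation of the Fej\'er inequality is exactly the paper's: Lemma~\ref{lemma2} with $\hat x:=x_k$ against a point of $\Omega_\gamma$, cocoercivity, and Young's inequality with the parameter tuned so the $\|T(u_k)-T(u)\|^2$ terms cancel, yielding $\|x_{k+1}-x\|^2+\lambda_k\bigl(2-\tfrac{\gamma}{2\beta}-\lambda_k\bigr)\|w_k\|^2\le\|x_k-x\|^2$ with $w_k:=v_k-u_k$. From there, however, your proposal has genuine gaps. First, the step from $\liminf_k\|w_k\|=0$ to $w_k\to 0$ is not justified: $\sum_k a_k=+\infty$ and $\sum_k a_k\|w_k\|^2<+\infty$ only give the liminf (take $a_k=1/k$ and $\|w_k\|=1$ on a sparse index set), and boundedness of $(x_k)$ adds nothing. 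The paper closes this by a second, separate application of Lemma~\ref{lemma2} to the consecutive iterates $x:=x_{k+1}$, $\hat x:=x_k$, which after another Young/cocoercivity computation shows that $(\|w_k\|)_{k\in\mathbb{N}}$ is \emph{nonincreasing}; this is the substitute for averagedness and is the technical heart of the proof, and it is absent from your plan. Note that $w_k\to 0$ for the whole sequence is indispensable, since Proposition~\ref{p:Fejer} requires \emph{every} weak cluster point of $(x_k)$ to lie in $\Omega_\gamma$.

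Second, your identification of weak cluster points does not go through as stated. For the pairs $(u_{k_j},x_{k_j}-u_{k_j})\in\gra\gamma A$ and $(v_{k_j},2u_{k_j}-x_{k_j}-\gamma T(u_{k_j})-v_{k_j})\in\gra\gamma B$ both components converge only \emph{weakly}; the graph of a maximally monotone operator is sequentially closed in the weak--strong topology, not the weak--weak one, so ``demiclosedness'' of $\gra A$ and $\gra B$ separately cannot be invoked. The paper circumvents this by assembling the three inclusions into a single maximally monotone operator $S$ on $\Hilbert^3$ (a diagonal of $(\gamma A)^{-1}$, $(\gamma T)^{-1}$, $\gamma B$ plus a skew-symmetric matrix), for which the image component $(u_{k_n}-v_{k_n},u_{k_n}-v_{k_n},u_{k_n}-v_{k_n})$ converges \emph{strongly} to $0$, so weak--strong closedness applies. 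Third, your route to $T(u_k)\to T(\bar u)$ --- keeping a strictly positive $\|T(u_k)-T(\bar u)\|^2$ coefficient in the Young split --- is incompatible with the full ranges claimed: any Young parameter $a<2\beta$ degrades the $\|w_k\|^2$ coefficient to $2-\gamma/a-\lambda_k$, which can be negative when $\lambda_k=2-\gamma/(2\beta)$, and even where it works the summability argument would only yield a liminf. The paper instead establishes $T(u_k)\to T(\bar u)$ \emph{after} weak convergence, by rearranging the same inequalities into $\beta\gamma\|T(\bar u)-T(u_k)\|^2\le\langle\bar x-x_k,w_k\rangle+\langle u_k-\bar u,w_k\rangle+\langle\bar u-v_k,w_k\rangle+\gamma\langle T(\bar u)-T(u_k),w_k\rangle$, whose right-hand side tends to $0$ because $w_k\to 0$ and the other factors are bounded. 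Your treatment of part~(iii) and of $T(\zer(A+B+T))=\{T(\bar u)\}$ is in line with the paper once these three points are repaired.
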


\begin{proof}
Define the sequences
\begin{equation*}
(\forall k \in \mathbb{N})  \qquad z_k := \gamma {T}(u_k) \qquad \text{and} \qquad w_k := v_k-u_k
\end{equation*}
and note the following relations that \eqref{eq:GradientDR} yields
\begin{equation}\label{eq:graphrelations}
(u_k, x_k-u_k)  \in \gra{\gamma A}\quad\text{and}\quad (v_k, 2u_k-x_k-z_k-v_k) \in \gra{\gamma B}.
\end{equation}

Pick any $x \in \Omega_\gamma$ and denote $u:=J_{\gamma A}(x)$. By definition of $\Omega_\gamma$, we have $u= J_{\gamma B}(2u-x-\gamma {T}(u))$. Applying Lemma~\ref{lemma2} to $x$ and $\hat{x}:= x_k$, observing that $\hat{u}=u_k$, $v=u$ and $\hat{v}=v_k$, yields
\begin{align}
0 & \leq \langle x-x_k, w_k \rangle - \|w_k\|^2 - \gamma \langle {T}(u)-{T}(u_k), u-v_k \rangle.\label{eq:monoAB}
\end{align}
The first two terms in \eqref{eq:monoAB} multiplied by $2\lambda_k$ can be expressed as
\begin{equation}\label{eq:first2terms}
\begin{aligned}
2\lambda_k \left( \langle x-x_k, w_k \rangle - \|w_k\|^2 \right)  &= 2 \langle x-x_k, x_{k+1}-x_k \rangle - 2\lambda_k \|w_k\|^2 \\
& = \|x_k-x\|^2-\|x_{k+1}-x\|^2+\lambda_k(\lambda_k-2) \|w_k\|^2.
\end{aligned}
\end{equation}
Now, using the $\beta$-cocoercivity of ${T}$, the last term in \eqref{eq:monoAB} can be expressed as
\begin{equation}\label{eq:lastterm}
\begin{aligned}
- \gamma \langle {T}(u)-{T}(u_k), u-v_k \rangle
& = -\gamma \langle {T}(u)-{T}(u_k), u-u_k \rangle + \gamma \langle {T}(u)-{T}(u_k), w_k \rangle \\
& \leq - \beta\gamma \|T(u)-T(u_k)\|^2 + \gamma \langle {T}(u)-{T}(u_k), w_k \rangle  .
\end{aligned}
\end{equation}
Using Cauchy--Schwarz and Young's inequalities,  the last term in \eqref{eq:lastterm} can be estimated as
\begin{equation}\label{eq:lastterm1}
\begin{aligned}
\gamma\langle {T}(u)-{T}(u_k), w_k \rangle & \leq  \beta\gamma \| {T}(u)-{T}(u_k) \|^2+ \frac{\gamma}{4\beta} \|w_k\|^2.
\end{aligned}
\end{equation}
Combining \eqref{eq:monoAB}-\eqref{eq:lastterm1}, we have
\begin{equation*}
\|x_{k+1}-x\|^2 + \lambda_k(2-\lambda_k)\|w_k\|^2 \leq \|x_k-x\|^2  + \frac{2\gamma\lambda_k}{4\beta} \|w_k\|^2.
\end{equation*}
As a result, we reach the expression
\begin{equation}\label{eq:Fmon}
\|x_{k+1}-x\|^2 + \lambda_k\left(2-\frac{\gamma}{2\beta}-\lambda_k\right) \|w_k\|^2 \leq \|x_{k}-x\|^2.
\end{equation}
Since $\lambda_k\leq 2-\gamma/(2\beta)$, equation \eqref{eq:Fmon} implies that $(x_k)_{k\in\mathbb{N}}$ is Fej\'er monotone with respect to~$\Omega_\gamma$ and thus, bounded.  Since resolvents are nonexpansive and ${T}$ is $\frac{1}{\beta}$-Lipschitz continuous (by Cauchy--Schwarz), it follows that $(u_k)_{k\in\mathbb{N}}$, $(z_k)_{k\in\mathbb{N}}$ and $(v_k)_{k\in\mathbb{N}}$ are bounded.

\eqref{it:GradientDR-ii}:~ The Fej\'er monotonicity of $(x_k)_{k\in\mathbb{N}}$ implies that the sequence $(\|x_k-x\|)_{k\in\mathbb{N}}$ is nonincreasing and convergent. Telescoping~\eqref{eq:Fmon}, we obtain
$$\sum_{k\in\mathbb{N}}\lambda_k\left(2-\frac{\gamma}{2\beta}-\lambda_k\right)\|w_k\|^2\leq \|x_0-x\|^2,$$
which implies $\liminf_{k\to\infty}\|w_k\| = 0$, since $\sum_{k\in\mathbb{N}}\lambda_k\left(2-\frac{\gamma}{2\beta}-\lambda_k\right)=+\infty$. 
To prove that $w_k\to 0$, it suffices to show that the sequence $(\|w_k\|)_{k\in\mathbb{N}}$ is nonincreasing. Applying Lemma~\ref{lemma2} with $x:=x_{k+1}$ and $\hat{x}:=x_k$ yields
$$0\leq \langle x_{k+1}-x_k,w_k-w_{k+1}\rangle-\|w_{k+1}-w_k\|^2-\gamma\langle T(u_{k+1})-T(u_k),v_{k+1}-v_k\rangle.$$
The first two terms multiplied by $2$ can be expressed as
$$2\langle \lambda_k w_k,w_k-w_{k+1}\rangle-2\|w_{k+1}-w_k\|^2=\lambda_k^2\|w_k\|^2-\|w_{k+1}-w_k\|^2-\|w_{k+1}-w_k+\lambda_kw_k\|^2,$$
while the third term is equal to
\begin{align*}
-\gamma\langle T(u_{k+1})&-T(u_k),v_{k+1}-v_k\rangle\\
&=-\gamma\langle T(u_{k+1})-T(u_k),w_{k+1}-w_k\rangle-\gamma\langle T(u_{k+1})-T(u_k),u_{k+1}-u_k\rangle\\
&\leq \gamma\beta\|T(u_{k+1})-T(u_k)\|^2+\frac{\gamma}{4\beta}\|w_{k+1}-w_k\|^2-\gamma\beta\|T(u_{k+1})-T(u_k)\|^2\\
&=\frac{\gamma}{4\beta}\|w_{k+1}-w_k\|^2,
\end{align*}
where we have used again Young's inequality and the cocoercivity of $T$. Therefore, we deduce
\begin{align*}
0&\leq \lambda_k^2\|w_k\|^2-\left\|w_{k+1}-w_k+\lambda_kw_k\right\|^2+\left(\frac{\gamma}{2\beta}-1\right)\|w_{k+1}-w_k\|^2\\
&=\lambda_k^2\|w_k\|^2-\lambda_k^2\|w_k\|^2+2\lambda_k\langle w_{k+1}-w_k,-w_k\rangle+\left(\frac{\gamma}{2\beta}-2\right)\|w_{k+1}-w_k\|^2\\
&=\lambda_k\|w_k\|^2-\lambda_k\|w_{k+1}\|^2+\left(\frac{\gamma}{2\beta}-2+\lambda_k\right)\|w_{k+1}-w_k\|^2,
\end{align*}
that is,
$$\lambda_k\|w_{k+1}\|^2\leq \lambda_k\|w_k\|^2-\left(2-\frac{\gamma}{2\beta}-\lambda_k\right)\|w_{k+1}-w_k\|^2\leq\lambda_k\|w_k\|^2,$$
so $(\|w_k\|)_{k\in\mathbb{N}}$ is nonincreasing, since $\lambda_k>0$. Hence, we have proved that $w_k\to 0$.

Let $(\bar{x},\bar{u},\bar{z})$ be a weak sequential cluster point of the bounded sequence $(x_k,u_k,z_k)_{k\in\mathbb{N}}$. Hence, there is a subsequence of $(x_{k_n},u_{k_n},z_{k_n})_{n\in\mathbb{N}}$ which is weakly convergent to $(\bar{x},\bar{u},\bar{z})$. Now, consider the operator $S: \Hilbert^3 \setto \Hilbert^3$ given by
$$S:=
\colvec{(\gamma A)^{-1}\\(\gamma{T})^{-1}\\ \gamma B} +
\colvec{0&0&-\Id\\0&0&-\Id\\\Id&\Id&0},
$$
which is maximally monotone, because it is the sum of a maximally monotone operator and a skew-symmetric matrix (see, e.g., \cite[Example~20.35~\&~Corollary~25.5(i)]{bauschke2017}).
From~\eqref{eq:graphrelations}, it follows that
$$
\colvec{u_{k_n}-v_{k_n}\\u_{k_n}-v_{k_n}\\u_{k_n}-v_{k_n}} \in S \colvec{x_{k_n}-u_{k_n}\\z_{k_n}\\v_{k_n}}.
$$
As the graph of a maximally monotone operator is sequentially closed in the weak-strong topology (see, e.g., \cite[Proposition~20.38]{bauschke2017}), taking the limit as $n\to\infty$ and observing that $x_{k_n} -u_{k_n}\wto \bar{x}-\bar{u}$ and $v_{k_n} \wto \bar{u}$ (since $w_{k_n}=v_{k_n}-u_{k_n} \to 0$), we deduce that
$$\colvec{0 \\0\\0} \in
\left(
\colvec{\left(\gamma A\right)^{-1} \\ \left(\gamma {T}\right)^{-1} \\ \gamma B} +
\colvec{0&0&-\Id \\ 0&0&-\Id \\ \Id&\Id&0}
\right)
\colvec{\bar{x}-\bar{u} \\ \bar{z} \\ \bar{u}}.$$
The latter inclusion is equivalent to
\begin{equation}\label{eq:ubarOmega}
\bar{u} = J_{\gamma A}(\bar{x}),\quad \bar{z} = \gamma {T}(\bar{u})\quad\text{and}\quad
\bar{u} = J_{\gamma B} (2 \bar{u}-\bar{x}- \bar{z}),
\end{equation}
which implies $\bar{x}\in\Omega_\gamma$. Therefore, every weak sequential cluster point of $(x_k)_{k\in\mathbb{N}}$ is contained in~$\Omega_\gamma$, and Proposition~\ref{p:Fejer} implies that  $(x_k)_{k\in\mathbb{N}}$ is weakly convergent to a point $\bar{x} \in \Omega_\gamma$. Then~\eqref{eq:ubarOmega} shows that $\bar{u}= J_{\gamma A}(\bar{x})$ and $\bar{z} = \gamma T(\bar{u})$ are the unique cluster points of $(u_k)_{k\in\mathbb{N}}$ and $(z_k)_{k\in\mathbb{N}}$, respectively, and hence $u_k \wto \bar{u}$, $v_k \wto \bar{u}$ and $z_k \wto \bar{z}$.

Moreover, since $x$ was arbitrarily chosen in $\Omega_\gamma$,~\eqref{eq:monoAB} and~\eqref{eq:lastterm} also hold with $u$ replaced by $\bar{u}$ and $x$ replaced by~$\bar{x}$. From the resulting inequalities, we obtain
\begin{equation}\label{e:strongT(u)}
\begin{aligned}
\beta \gamma \|T(\bar{u})-T(u_k)\|^2 \leq\, &  \langle \bar{x}-x_k,w_k\rangle + \langle u_k-\bar{u},w_k \rangle\\ &+ \langle \bar{u}-v_k, w_k \rangle + \gamma \langle T(\bar{u})-T(u_k), w_k \rangle,
\end{aligned}
\end{equation}
and thus $T(u_k) \to T(\bar{u})$.
Now, by Lemma~\ref{l:solP3Op},  we know that $\bar{u} \in \zer \left(A+ B+T\right)$.

Finally, pick any $\tilde{u} \in \zer\left(A+B+T\right)$. By Lemma~\ref{l:solP3Op}, there is $\tilde{x} \in \Omega_{\gamma}$ such that $\tilde{u} = J_{\gamma A}(\tilde{x})$. Setting $x=\tilde{x}$ at the beginning of the proof,~\eqref{e:strongT(u)} becomes
\begin{align*}
\beta \gamma \|T(\tilde{u})-T(u_k)\|^2 \leq\, & \langle \tilde{x}-x_k,w_k\rangle + \langle u_k-\tilde{u},w_k \rangle+ \langle \tilde{u}-v_k, w_k \rangle + \gamma \langle T(\tilde{u})-T(u_k), w_k \rangle.
\end{align*}
Since $x_k \wto \bar{x}$, $u_k \wto \bar{u}$, $v_k \wto \bar{u}$, $w_k \to 0$ and $T(u_k) \to T(\bar{u})$, the inequality above implies $T(\bar{u}) = T(\tilde{u})$. This proves that $T\left(\zer(A+B+T)\right)=\{T(\bar{u})\}$.

\eqref{it:GradientDR-iii}:~Assume first that $A$ is uniformly monotone. Since the sequence $(u_k)_{k\in\mathbb{N}}$ is bounded, the set $\{\bar{u}\}\cup\{u_k, k\geq 0\}\subset \dom A$ is bounded. Thus, using uniform monotonicity in Lemma~\ref{lemma2} with $x:=\bar{x}$ and $\hat{x}:=x_k$, we obtain the stronger inequality
\begin{equation}
\gamma\phi(\|\bar{u}-u_k\|)\leq \langle \bar{x}-x_k, w_k \rangle - \|w_k\|^2 - \gamma \langle {T}(\bar{u})-{T}(u_k), \bar{u}-v_k \rangle,
\end{equation}
which entails $\gamma\phi(\|\bar{u}-u_k\|)\to 0$. Since $\phi$ is increasing, we deduce that $u_k\to \bar{u}$, which implies $v_k\to \bar{u}$. When $B$ is uniformly monotone, the result similarly follows.

Finally, suppose that the demiregularity assumption holds. By (i), we know that $u_k\wto \bar{u}$ and $T(u_k)\to T(\bar{u})$, so the demiregularity of $T$ at $\bar{u}$ implies that $u_k\to \bar{u}$. Since $v_k-u_k\to 0$, we also obtain that $v_k\to \bar{u}$.
\end{proof}

\begin{remark}\label{rem:1} (i) The stepsize $\gamma$ in~\cite[Theorem~2.1]{davis2017three}  is assumed to be in ${]0,2\beta\varepsilon[}$, with $\varepsilon\in{]0,1[}$, while Theorem~\ref{t:GradientDR} allows to take stepsizes in the interval ${]0,4\beta[}$, which is twice larger. Note that our assumption is required to guarantee that  $2-\gamma/(2\beta)>0$. The relaxation parameters $(\lambda_k)_{k\in\mathbb{N}}$ in~\cite[Theorem~2.1]{davis2017three} must be taken in $]0,2-\varepsilon[$, while the interval given in Theorem~\ref{t:GradientDR} is $]0,2-\gamma/(2\beta)]$. If $\gamma\in{]0,2\beta\varepsilon[}$, we have $2-\varepsilon<2-\gamma/(2\beta)$. Thus, Theorem~\ref{t:GradientDR} additionally allows to take some of the relaxation parameters equal to $2-\gamma/(2\beta)$ (but not all of them, as we need $\sum_{k\in\mathbb{N}}\lambda_k\left(2-\frac{\gamma}{2\beta}-\lambda_k\right)=+\infty$, unless either $A$ or $B$ is uniformly monotone). Finally, unlike~\cite[Theorem~2.1]{davis2017three}, we do not require the assumption $\inf_{k\in\mathbb{N}}\lambda_k>0$.\\
(ii) In~Theorem~\ref{t:GradientDR}\eqref{it:GradientDR-iii}, even when $\sum_{k\in\mathbb{N}}\lambda_k\left(2-\frac{\gamma}{2\beta}-\lambda_k\right)<+\infty$, we have proved that the sequence $(u_k)_{k\in\mathbb{N}}$ (respectively $(v_k)_{k\in\mathbb{N}}$) is strongly convergent to $\bar{u}$ when $A$ (respectively $B$) is uniformly monotone.\\
(iii) Observe that it is also possible to prove $x_k\wto \bar{x}\in\Omega_\gamma$ using the notion of \emph{conically averaged operators} recently introduced in~\cite{BartzDaoPhan20}, not only for a fixed relaxation parameter $\lambda_k=\lambda$, as it was done in~\cite[Corollary~4.2]{DaoPhan21}. Indeed, by~\cite[Theorem~4.1]{DaoPhan21}, the operator $DY_\gamma$ in~\eqref{eq:DYoperator} is conically $(2-\gamma/(2\beta))^{-1}$-averaged, so~\cite[Proposition~2.9]{BartzDaoPhan20} can be applied to deduce the convergence of the Krasnosel'ski\u{i}--Mann iteration~\eqref{eq:DY0} to a fixed point of $DY_\gamma$, which belongs to $\Omega_\gamma$ by Lemma~\ref{l:solP3Op}.
\end{remark}

As a corollary, we obtain the following convergence result for the forward-backward splitting algorithm that allows doubling the range of the stepsizes assumed in~\cite[Theorem~26.14]{bauschke2017} (which is a particular case of~\cite[Proposition~4.4]{CombettesYamada}). Although this wider range of the stepsizes has been shown before in~\cite{patrinos17,Giselsson,Walaa}, it has not yet become widely known in the literature.

\begin{corollary}
\label{c:FB}
Let $B:\Hilbert \setto \Hilbert$ be a maximally monotone operator and $T:\Hilbert \to \Hilbert$ be a $\beta$-cocoercive operator, with $\beta >0$, such that $\zer \left(B+T\right) \neq \emptyset$. Set a stepsize $\gamma \in{]0,4\beta[}$ and consider a sequence of relaxation parameters $(\lambda_k)_{k\in\mathbb{N}}$  in $]0,2-\gamma/(2\beta)]$ such that $\sum_{k\in\mathbb{N}}\lambda_k\left(2-\frac{\gamma}{2\beta}-\lambda_k\right)= +\infty$. Given some initial point $x_0\in\Hilbert$, consider the sequences defined by
\begin{equation*}\label{e:FB}
\left\{\begin{aligned}
  y_k &= x_k-\gamma {T}(x_k) \\
  x_{k+1} &= x_k + \lambda_k (J_{\gamma B}(y_k)-x_k).
\end{aligned}\right.
\end{equation*}
Then, the following assertions hold:
\begin{enumerate}[(i)]
\item $(x_k)_{k\in\mathbb{N}}$ converges weakly to a point $\bar{x} \in \zer \left(B+T\right)$ and $\left(T(x_k)\right)_{k\in\mathbb{N}}$ converges strongly to the unique dual solution $T(\bar{x})$.
\item If either $B$ is uniformly monotone on every bounded subset of its domain, or T is demiregular at every point in $\zer \left(B+T\right)$, then $(x_k)_{k\in \mathbb{N}}$ converges strongly to $\bar{x} \in \zer\left(B+T\right)$.
\end{enumerate}
\end{corollary}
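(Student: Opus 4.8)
The plan is to obtain the forward-backward scheme as the special case $A=0$ of Theorem~\ref{t:GradientDR} and then translate the conclusions. First I would note that when $A=0$, the resolvent $J_{\gamma A}=\Id$, so the iteration~\eqref{eq:GradientDR} becomes $u_k=x_k$, $v_k=J_{\gamma B}(2x_k-x_k-\gamma T(x_k))=J_{\gamma B}(x_k-\gamma T(x_k))=J_{\gamma B}(y_k)$, and $x_{k+1}=x_k+\lambda_k(v_k-u_k)=x_k+\lambda_k(J_{\gamma B}(y_k)-x_k)$, which is exactly the stated scheme. Since $A=0$ is trivially maximally monotone and $B$ is maximally monotone with $T$ being $\beta$-cocoercive, the hypotheses of Theorem~\ref{t:GradientDR} are met with the same $\gamma\in{]0,4\beta[}$ and the same relaxation parameters; moreover $\zer(A+B+T)=\zer(B+T)\neq\emptyset$, so $\Omega_\gamma\neq\emptyset$ by Lemma~\ref{l:solP3Op}.

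Next I would read off the conclusions. By Theorem~\ref{t:GradientDR}\eqref{it:GradientDR-ii}, $x_k\wto\bar{x}\in\Omega_\gamma$, $u_k\wto\bar{u}$, and since here $u_k=x_k$ we get $\bar{u}=\bar{x}$; the relation~\eqref{eq:wconvergence} then reads $\bar{x}=J_{\gamma B}(\bar{x}-\gamma T(\bar{x}))\in\zer(B+T)$, which is the weak-convergence claim in (i). The strong convergence $T(u_k)\to T(\bar{u})$ becomes $T(x_k)\to T(\bar{x})$, and the statement $T(\zer(A+B+T))=\{T(\bar{u})\}$ says precisely that $T(\bar{x})$ is the unique dual solution (every zero of $B+T$ produces the same value of $T$), giving the remainder of (i). For (ii), the hypothesis that $B$ is uniformly monotone on every bounded subset of its domain, or that $T$ is demiregular at every point of $\zer(B+T)$, is exactly the hypothesis of Theorem~\ref{t:GradientDR}\eqref{it:GradientDR-iii} (the ``either $A$ or $B$'' clause is satisfied through $B$); hence $(u_k)_{k\in\mathbb{N}}=(x_k)_{k\in\mathbb{N}}$ converges strongly to $\bar{x}\in\zer(B+T)$.

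There is essentially no obstacle here: the only thing to be careful about is the bookkeeping that $u_k=x_k$ collapses the two shadow sequences into one and forces $\bar{u}=\bar{x}$, and that the phrase ``unique dual solution'' is just a restatement of the last sentence of Theorem~\ref{t:GradientDR}\eqref{it:GradientDR-ii}. One could alternatively give a self-contained direct proof mimicking the argument of Theorem~\ref{t:GradientDR} with $A=0$ throughout (in which case Lemma~\ref{lemma2} simplifies, as the term coming from monotonicity of $\gamma A$ disappears), but invoking the theorem directly is cleaner and is the route I would take.
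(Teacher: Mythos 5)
Your proposal is correct and follows exactly the paper's route: the paper's proof is the one-line application of Theorem~\ref{t:GradientDR} with $A=0$ (so that $u_k=x_k$ and the scheme collapses to forward-backward), reading off (i) and (ii) from parts \eqref{it:GradientDR-ii} and \eqref{it:GradientDR-iii}. The only detail the paper adds is a citation identifying $T(\bar{x})$ with the dual solution, which your bookkeeping already captures.
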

\begin{proof}
Apply Theorem~\ref{t:GradientDR} with $A=0$. By Theorem~\ref{t:GradientDR}\eqref{it:GradientDR-ii}, $T(x) \to T(\bar{x})$ and $T\left(\zer\left(B+T\right)\right)=\{T(\bar{x})\}$, which is the solution to the dual problem, see~\cite[Proposition~26.1(iv)]{bauschke2017}.
\end{proof}


We conclude this section by deriving a splitting algorithm for computing the resolvent of $A+B+T$. To this aim, we use the systematic framework developed in~\cite{franstrengthening}, based on the notion of strengthening of an operator.
\begin{definition}
Let $\theta>0$, $\sigma\in\R$ and let $w\in\Hi$. Given $A\colon\Hilbert\setto\Hilbert$, the \emph{$(\theta,\sigma)$-strengthening with inner perturbation $w$} of $A$ is the operator $A_w^{(\theta,\sigma)}\colon\Hilbert\setto\Hilbert$ defined by
\begin{equation*}\label{eq:strength def}
  A_w^{(\theta,\sigma)}:=A\circ\left(\theta\Id-w\right)+\sigma\Id.
\end{equation*}
\end{definition}

\begin{theorem}[Strengthened-Davis--Yin splitting]\label{t:DYsplitting} Let $A, B: \Hilbert \setto \Hilbert$ be maximally $\alpha_A$-monotone and $\alpha_B$-monotone operators, respectively, and let $T:\Hilbert \to \Hilbert$ be a $\beta$-cocoercive and maximally $\alpha_T$-monotone operator, with $\beta >0$. Let $\theta>0$, $\sigma_A,\sigma_B\in\R$ and $\sigma_T\geq 0$ be such that
\begin{equation}\label{eq:assumption}
\sigma_A+\sigma_B+\sigma_T>0\quad\text{and}\quad\left(\theta \alpha_A + \sigma_A,\theta\alpha_B+\sigma_B,\theta\alpha_T+\sigma_T\right)\in\R_{+}^3\setminus\{0_3\}.
\end{equation}
Let $\mu := (\theta / \beta + \sigma_T)^{-1}$ and $\gamma \in{]0,4\mu[}$. Consider a sequence of relaxation parameters $(\lambda_k)_{k \in \mathbb{N}}$ in $]0,2-\gamma/(2\mu)]$ verifying $\sum_{k\in \mathbb{N}}\lambda_k\left(2-\frac{\gamma}{2\mu}-\lambda_k\right) = +\infty$. Suppose $q \in \ran \left(\Id + \frac{\theta}{\sigma_A+\sigma_B+\sigma_T}\left(A+B+T\right)\right)$.  Given any $x_0 \in \Hilbert$, consider the sequences
\begin{equation}\label{eq:DYsplitting}
\left\{\begin{aligned}
u_k & = J_{\frac{\gamma \theta}{1+\gamma\sigma_A}A}\left(\frac{1}{1+\gamma \sigma_A}(x_k + \gamma \sigma_A q )\right) \\
v_k & = J_{\frac{\gamma\theta}{1+\gamma \sigma_B}B}\left(\frac{1}{1+\gamma\sigma_B}((2-\gamma \sigma_T)u_k-x_k-\theta\gamma T(u_k)+\gamma(\sigma_B+\sigma_T)q)\right) \\
x_{k+1} & = x_k + \lambda_k (v_k-u_k).
\end{aligned}\right.
\end{equation}
Then $(u_k)_{k\in\mathbb{N}}$ and $(v_k)_{k\in\mathbb{N}}$ are weakly convergent to $J_{\frac{\theta}{\sigma_A+\sigma_B+\sigma_T}(A+B+T)}(q)$, and  $(x_k)_{k\in\mathbb{N}}$ is weakly convergent to $\bar{x}$, with
$$ J_{\frac{\gamma \theta}{1+\gamma \sigma_A}A} \left(\frac{1}{1+\gamma\sigma_A}(\bar{x}+\gamma \sigma_A q)\right) = J_{\frac{\theta}{\sigma_A+\sigma_B+\sigma_T}(A+B+T)}(q).$$
Further,  if $\theta \alpha_A+\sigma_A >0$ (respectively $\theta_B \alpha + \sigma_B > 0$) then  the convergence of $(u_k)_{k\in\mathbb{N}}$  (respectively $(v_k)_{k\in\mathbb{N}}$) is strong, even when $\sum_{k\in \mathbb{N}} \lambda_k\left(2-\frac{\gamma}{2\mu}-\lambda_k\right) < +\infty$.
\end{theorem}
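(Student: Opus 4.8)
The strategy is to apply Theorem~\ref{t:GradientDR} to the strengthenings of $A$, $B$ and $T$ with common inner perturbation $w:=-q$, and then transfer the conclusions through the affine change of variables $\zeta\mapsto\theta\zeta-w$. Set $\mathbf{A}:=A_w^{(\theta,\sigma_A)}$, $\mathbf{B}:=B_w^{(\theta,\sigma_B)}$ and $\mathbf{T}:=T_w^{(\theta,\sigma_T)}$. From the properties of strengthenings established in~\cite{franstrengthening}, $\mathbf{A}$ and $\mathbf{B}$ are maximally $(\theta\alpha_A+\sigma_A)$- and $(\theta\alpha_B+\sigma_B)$-monotone, respectively, which by~\eqref{eq:assumption} makes them in particular maximally monotone; the only point I would want to spell out is that $\mathbf{T}$ is $\mu$-cocoercive with $\mu=(\theta/\beta+\sigma_T)^{-1}$. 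Indeed, abbreviating $s:=T(\theta x-w)$ and $t:=T(\theta y-w)$, the $\beta$-cocoercivity of $T$ gives $\langle\mathbf{T}(x)-\mathbf{T}(y),x-y\rangle\geq\tfrac{\beta}{\theta}\|s-t\|^2+\sigma_T\|x-y\|^2$, while Young's inequality and $\sigma_T\geq0$ yield $2\sigma_T\langle s-t,x-y\rangle\leq\sigma_T\bigl(\tfrac{\beta}{\theta}\|s-t\|^2+\tfrac{\theta}{\beta}\|x-y\|^2\bigr)$; since $\mu(1+\tfrac{\beta\sigma_T}{\theta})=\tfrac{\beta}{\theta}$ and $\mu(\tfrac{\theta\sigma_T}{\beta}+\sigma_T^2)=\sigma_T$, combining these shows $\tfrac{\beta}{\theta}\|s-t\|^2+\sigma_T\|x-y\|^2\geq\mu\|(s-t)+\sigma_T(x-y)\|^2=\mu\|\mathbf{T}(x)-\mathbf{T}(y)\|^2$.

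Next I would identify the zeros of the strengthened sum with the resolvent to be computed. Put $\sigma:=\sigma_A+\sigma_B+\sigma_T>0$, so that $\mathbf{A}+\mathbf{B}+\mathbf{T}=(A+B+T)\circ(\theta\Id-w)+\sigma\Id$. For $z\in\Hilbert$ and $p:=\theta z-w$,
$$z\in\zer(\mathbf{A}+\mathbf{B}+\mathbf{T})\iff0\in(A+B+T)(p)+\tfrac{\sigma}{\theta}(p+w)\iff-w\in\bigl(\Id+\tfrac{\theta}{\sigma}(A+B+T)\bigr)(p).$$
Since $A+B+T$ is $(\alpha_A+\alpha_B+\alpha_T)$-monotone and $1+\tfrac{\theta}{\sigma}(\alpha_A+\alpha_B+\alpha_T)=\tfrac{1}{\sigma}\bigl(\sigma+\theta(\alpha_A+\alpha_B+\alpha_T)\bigr)>0$ by~\eqref{eq:assumption}, Proposition~\ref{prop:Jalpha} shows $J_{\frac{\theta}{\sigma}(A+B+T)}$ is single-valued; because $q\in\ran(\Id+\tfrac{\theta}{\sigma}(A+B+T))$ and $w=-q$, the point $p^\ast:=J_{\frac{\theta}{\sigma}(A+B+T)}(q)$ is well-defined and $\zer(\mathbf{A}+\mathbf{B}+\mathbf{T})=\{(p^\ast-q)/\theta\}\neq\emptyset$.

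With these in hand, I would invoke Theorem~\ref{t:GradientDR} for $\mathbf{A},\mathbf{B},\mathbf{T}$ with cocoercivity constant $\mu$, stepsize $\gamma\in{]0,4\mu[}$ and relaxation parameters $(\lambda_k)$ exactly as given, and initial point $\mathbf{x}_0:=(x_0-q)/\theta$; this produces a Fej\'er-monotone $(\mathbf{x}_k)$ with $\mathbf{x}_k\wto\bar{\mathbf{x}}$, $\mathbf{u}_k\wto\bar{\mathbf{u}}$, $\mathbf{v}_k\wto\bar{\mathbf{u}}$ and $\bar{\mathbf{u}}\in\zer(\mathbf{A}+\mathbf{B}+\mathbf{T})$. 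The decisive step is to check that, under $\zeta\mapsto\theta\zeta-w$, this iteration is precisely~\eqref{eq:DYsplitting}. For this I would use the resolvent identity for strengthenings, $J_{\gamma\mathbf{A}}(\zeta)=\tfrac{1}{\theta}\bigl(w+J_{\frac{\gamma\theta}{1+\gamma\sigma_A}A}(\tfrac{\theta\zeta}{1+\gamma\sigma_A}-w)\bigr)$ and its analogue for $\mathbf{B}$, together with $\mathbf{T}(\mathbf{u}_k)=T(u_k)+\tfrac{\sigma_T}{\theta}(u_k+w)$ where $u_k:=\theta\mathbf{u}_k-w$; setting also $v_k:=\theta\mathbf{v}_k-w$ and $x_k:=\theta\mathbf{x}_k-w$, a short induction (using $v_k-u_k=\theta(\mathbf{v}_k-\mathbf{u}_k)$ for the last line) shows that $(x_k,u_k,v_k)$ obeys~\eqref{eq:DYsplitting} verbatim, with $x_0=\theta\mathbf{x}_0-w$. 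Translating the limits yields $u_k\wto p^\ast$, $v_k\wto p^\ast$ and $x_k\wto\bar{x}:=\theta\bar{\mathbf{x}}-w$, and translating $\bar{\mathbf{u}}=J_{\gamma\mathbf{A}}(\bar{\mathbf{x}})$ gives the stated identity $J_{\frac{\gamma\theta}{1+\gamma\sigma_A}A}\bigl(\tfrac{1}{1+\gamma\sigma_A}(\bar{x}+\gamma\sigma_A q)\bigr)=p^\ast=J_{\frac{\theta}{\sigma}(A+B+T)}(q)$. Finally, if $\theta\alpha_A+\sigma_A>0$ then $\mathbf{A}$ is $(\theta\alpha_A+\sigma_A)$-strongly monotone, hence uniformly monotone on all of $\Hilbert$, so Theorem~\ref{t:GradientDR}\eqref{it:GradientDR-iii} together with Remark~\ref{rem:1}(ii) (covering the case $\sum_{k\in\mathbb{N}}\lambda_k(2-\tfrac{\gamma}{2\mu}-\lambda_k)<+\infty$) gives $\mathbf{u}_k\to\bar{\mathbf{u}}$, hence $u_k\to p^\ast$; the case $\theta\alpha_B+\sigma_B>0$ is symmetric.

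I expect the main obstacle to be the change-of-variables bookkeeping: verifying term by term that the strengthened scheme~\eqref{eq:DYsplitting} coincides with the Davis--Yin iteration of Theorem~\ref{t:GradientDR} applied to $(\mathbf{A},\mathbf{B},\mathbf{T})$, especially in the update of $v_k$, where the $\sigma_T$-, $\theta$- and $q$-contributions must be assembled correctly. A secondary point requiring care is to quote from~\cite{franstrengthening} exactly the properties needed, since Theorem~\ref{t:GradientDR} uses that $\mathbf{A}$ and $\mathbf{B}$ are \emph{maximally} monotone and requires the sharp cocoercivity constant $\mu$ of $\mathbf{T}$.
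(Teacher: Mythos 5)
Your proposal is correct and follows essentially the same route as the paper: apply Theorem~\ref{t:GradientDR} to the $(\theta,\sigma)$-strengthenings of $A$, $B$, $T$ with inner perturbation $-q$, then undo the change of variables $(x_k,u_k,v_k)=(\theta\hat{x}_k+q,\theta\hat{u}_k+q,\theta\hat{v}_k+q)$ to recover~\eqref{eq:DYsplitting}, and use strong monotonicity of the strengthened operators together with Remark~\ref{rem:1}(ii) for the final assertion. The only difference is cosmetic: where the paper cites \cite[Theorem~1(iii) and Proposition~3]{franstrengthening} and \cite[Proposition~2.1]{dao2019resolvent} for the $\mu$-cocoercivity of the strengthened $T$ and the identification of the zero set, you verify these facts directly, and your computations check out.
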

\begin{proof}
Set $\hat{x}_0  := \frac{1}{\theta}(x_0-q)$ and consider the sequences
\begin{equation}
\left\{\begin{aligned}\label{eq:DYstrengthened}
\hat{u}_k & = J_{\gamma A_{-q}^{(\theta,\sigma_A)}}(\hat{x}_k) \\
\hat{v}_k & = J_{\gamma B_{-q}^{(\theta,\sigma_B)}}\left(2\hat{u}_k-\hat{x}_k-\gamma T_{-q}^{(\theta, \sigma_T)}(\hat{u}_k)\right) \\
\hat{x}_{k+1} & = \hat{x}_k + \lambda_k (\hat{v}_k-\hat{u}_{k}).
\end{aligned}\right.
\end{equation}
By~\eqref{eq:assumption} and~\cite[Proposition~2.1]{dao2019resolvent}, the operators $A_{-q}^{(\theta,\sigma_A)}$, $B_{-q}^{(\theta,\sigma_B)}$ and
$T_{-q}^{(\theta, \sigma_T)}$ are maximally monotone,
and by~\cite[Theorem~1(iii)]{franstrengthening}, $T_{-q}^{(\theta,\sigma_T)}$ is $\mu$-cocoercive. By assumption, $q \in \ran\left(\Id+\frac{\theta}{\sigma_A+\sigma_B+\sigma_T}\left(A+B+T\right)\right)$, and thus~\eqref{eq:assumption} and~\cite[Proposition~3]{franstrengthening} imply that
\begin{equation}\label{e:strength1}
\zer\left(A_{-q}^{(\theta,\sigma_A)}+ B_{-q}^{(\theta,\sigma_B)}+T_{-q}^{(\theta,\sigma_T)}\right) = \left\{ \frac{1}{\theta}\left( J_{\frac{\theta}{\sigma_A+\sigma_B+\sigma_T}(A+B+T)}(q)-q\right)\right\}.
\end{equation}
By Theorem~\ref{t:GradientDR}(\ref{it:GradientDR-ii}), $\hat{u}_k \wto \hat{u}$ and $\hat{v}_k \wto \hat{u}$, with
$$ \hat{u} \in \zer\left(A_{-q}^{(\theta,\sigma_A)}+ B_{-q}^{(\theta,\sigma_B)}+T_{-q}^{(\theta,\sigma_T)}\right),$$
and $\hat{x}_k \wto \hat{x}$, where $\hat{x}$ satisfies
\begin{equation}
\label{e:strength2}
\hat{u} = J_{\gamma A_{-q}^{(\theta,\sigma_A)}}(\hat{x}) \in \zer \left(A_{-q}^{(\theta,\sigma_A)}+ B_{-q}^{(\theta,\sigma_B)}+T_{-q}^{(\theta,\sigma_T)}\right).
\end{equation}
If $\theta \alpha_A+ \sigma_A >0$ (respectively $\theta \alpha_B+\sigma_B >0$), then $\hat{u}_k \to \hat{u}$ (respectively $\hat{v}_k \to \hat{u}$) by Theorem~\ref{t:GradientDR}(\ref{it:GradientDR-iii}), even if $\sum_{k\in \mathbb{N}}\lambda_k\left(2-\frac{\gamma}{2\mu}-\lambda_k\right) < +\infty$.
Thanks to~\cite[Proposition~2.1]{dao2019resolvent}, we may rewrite~\eqref{eq:DYstrengthened} as
\begin{equation*}
\left\{\begin{aligned}
\theta \hat{u}_k+q & = J_{\frac{\gamma\theta}{1+\gamma\sigma_A}A}\left(\frac{\theta}{1+\gamma\sigma_A}\hat{x}_k+q\right) \\
\theta \hat{v}_k+q & = J_{\frac{\gamma\theta}{1+\gamma\sigma_B}B}\left(\frac{\theta}{1+\gamma\sigma_B}\left(2\hat{u}_k-\hat{x}_k-\gamma \left(T(\theta\hat{u}_k+q)+ \sigma_T \hat{u}_k\right)\right)+q\right)
\end{aligned}\right.
\end{equation*}
Further, by~\eqref{e:strength2},~\eqref{e:strength1} and~\cite[Proposition~2.1]{dao2019resolvent},
$$J_{\frac{\theta}{\sigma_A+\sigma_B+\sigma_T}(A+B+T)}(q) =  \theta J_{\gamma A^{(\theta,\sigma_A)}_{-q}}\left(\hat{x}\right)+q=J_{\frac{\gamma \theta}{1+\gamma \sigma_A}A} \left(\frac{\theta}{1+\gamma\sigma_A}\hat{x}+q\right).$$
The result  follows by making the change of variables $(x_k,u_k,v_k) := (\theta\hat{x}_k+q,\theta\hat{u}_k+q,\theta\hat{v}_k+q)$ for all $k\in \mathbb{N}$ and $\bar{x} := \theta \hat{x}+ q$. The final assertion is a consequence of Remark~\ref{rem:1}(ii).
\end{proof}

\begin{remark}\label{rem:2}
Another way of computing the resolvent with parameter $\mu>0$ of $A+B+T$ at $q\in\Hi$ is applying the Davis--Yin splitting algorithm to $A$, $B$ and  $\widetilde{T}:= \frac{1}{\mu}(\Id -q)+ T$, where $\widetilde{T}$ is $\left(\beta^{-1}+\mu^{-1}\right)^{-1}$-cocoercive, by~\cite[Theorem~1(iii)]{franstrengthening}, and $\beta$ is the cocoercivity constant of $T$. Note that this is a particular instance covered by Theorem~\ref{t:DYsplitting}, taking $\sigma_T= \frac{1}{\mu}$, $\sigma_A=\sigma_B = 0$ and $\theta=1$.
\end{remark}

\section{Numerical experiments}\label{sec:4}

In this section we provide some numerical examples of the algorithms developed in the previous section.
These experiments aim not to be exhaustive and only intend to show the importance of appropriately choosing the stepsize and the relaxation parameters of the algorithms.

\subsection{A feasibility problem with hard and soft constraints}

Let $\mathbb{A},\mathbb{B},\mathbb{C} \subseteq \mathbb{R}^n$  be three closed and convex sets with nonempty intersection of the relative interiors of $\mathbb{A}$ and $\mathbb{B}$. Suppose $\mathbb{A}$ and $\mathbb{B}$ are \emph{hard constraints}, which need to be satisfied, and $\mathbb{C}$ is a third \emph{soft constraint}, which does not necessarily need to be fulfilled, but whose violation we want to reduce as much as possible. Imagine that, at the same time, we would like to find a point in $\mathbb{A}\cap \mathbb{B}$ as close as possible to a point $q\in\R^n$. This problem can be written as
\begin{equation}
\label{e:exstreng}
\argmin_{x \in \mathbb{A} \cap \mathbb{B}} \; \frac{1}{2}d^2(x,\mathbb{C}) + \frac{\rho}{2}\|x-q\|^2,
\end{equation}
where $d^2(x,\mathbb{C}):= \|x-P_{\mathbb{C}}(x)\|^2$ and $\rho >0$ is a regularization parameter specifying the importance of remaining close to the point $q$. Problem~\eqref{e:exstreng} can be reformulated as
\begin{equation*}
\argmin_{x \in \mathbb{R}^n} \iota_{\mathbb{A}}(x)+\iota_{\mathbb{B}}(x)+\frac{1}{2}\|x-q\|^2+\frac{1}{2\rho}d^2(x,\mathbb{C}),
\end{equation*}
whose solution is given by $\prox_{\left(\iota_{\mathbb{A}}+\iota_{\mathbb{B}}+\frac{1}{2\rho}d^2(\cdot,\mathbb{C})\right)}(q)$. The subdifferential sum rule (see, e.g.,~\cite[Corollary~16.50(v)]{bauschke2017}) guarantees the equality
$$
\prox_{\left(\iota_{\mathbb{A}}+\iota_{\mathbb{B}}+\frac{1}{2\rho}d^2(\cdot,\mathbb{C})\right)}(q) = J_{\left(\partial\iota_{\mathbb{A}}+\partial\iota_{\mathbb{B}}+\nabla\left(\frac{1}{2\rho}d^2(\cdot,\mathbb{C})\right)\right)}(q)
=J_{\left(N_{\mathbb{A}}+N_{\mathbb{B}}+\frac{1}{\rho}(\Id-P_\mathbb{C})\right)}(q),
$$
and thus, solving~\eqref{e:exstreng} boils down to computing the resolvent at $q$ of the sum of the three maximally monotone operators $A:=N_\mathbb{A}$, $B:=N_\mathbb{B}$ and $T:=\frac{1}{\rho}\left(\Id-P_\mathbb{C}\right)$, with $T$ being $\frac{1}{\rho}$-cocoercive  (see, e.g.,~\cite[Corollary~12.31]{bauschke2017}).

To illustrate on the problem~\eqref{e:exstreng}  the behavior of the Davis--Yin algorithm and its strengthened version derived in Theorem~\ref{t:DYsplitting}, we retake our simple introductory example of two balls $\mathbb{A}$ and $\mathbb{B}$ centered at $(-1.6,-0.75)$ and $(-0.35,0.12)$, with radii $0.55$ and $1$, respectively. We chose these values to make the problem slightly challenging. We now add a new third ball $\mathbb{C}$ with center $(1,-1)$ and radius $0.5$, the point $q:=(-1.75,1.5)$ and take $\rho:=1$. 
Observe that any combination of $\sigma_A\geq 0$, $\sigma_B\geq 0$ and $\sigma_T\geq 0$  such that $\theta:=\sigma_A+\sigma_B+\sigma_T>0$ satisfies the hypotheses of Theorem~\ref{t:DYsplitting}. Although finding the best values is beyond the scope of this work, for comparison, we tested the result of running the algorithm~\eqref{eq:DYsplitting} with $(\sigma_A,\sigma_B,\sigma_T)=(0,0,1/\mu)$ (which corresponds to Davis--Yin splitting, see Remark~\ref{rem:2}) and $(\sigma_A,\sigma_B,\sigma_T)=(0,1,1)$, using as starting point $x_0:=(0.7,1.7)$. In accordance with Theorem~\ref{t:DYsplitting}, the stepsize $\gamma$ must be chosen so that $\frac{\gamma}{\mu}\in{]0,4[}$, for $\mu=((\sigma_A+\sigma_B+\sigma_T)\rho+\sigma_T)^{-1}$. In Figure~\ref{fig:40} we have represented the iterates for $\lambda_k=0.99(2-\frac{\gamma}{2\mu})$ and for two values of $\frac{\gamma}{\mu}$, namely $1.5$ (overrelaxation) and $2.5$ (underrelaxation).

 \begin{figure}[ht!]\centering
\includegraphics[width=.49\textwidth]{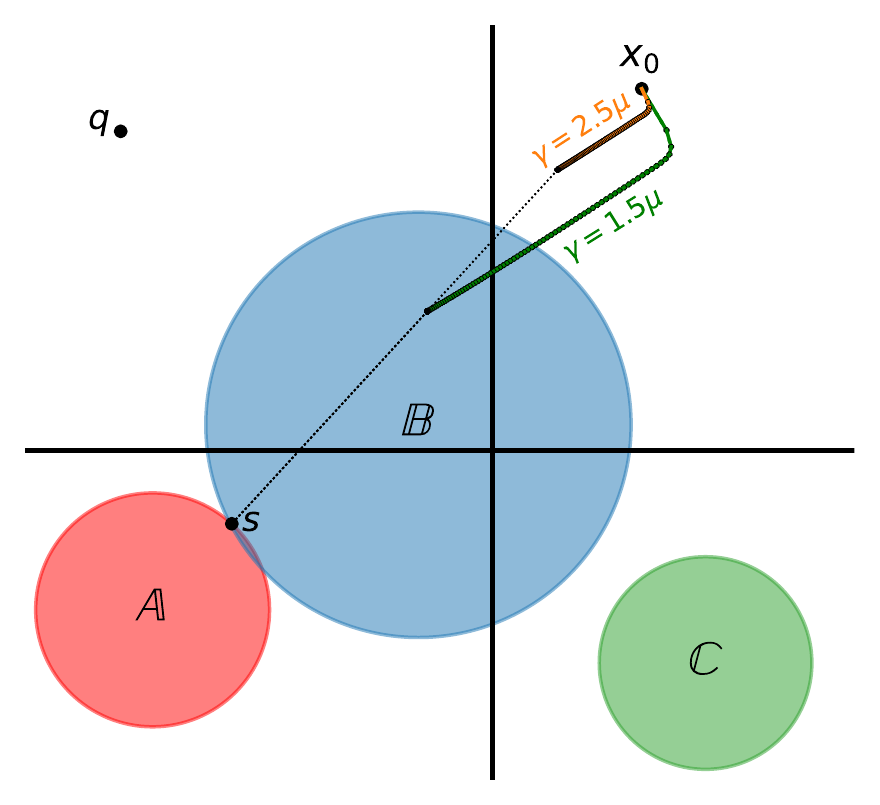}
\includegraphics[width=.49\textwidth]{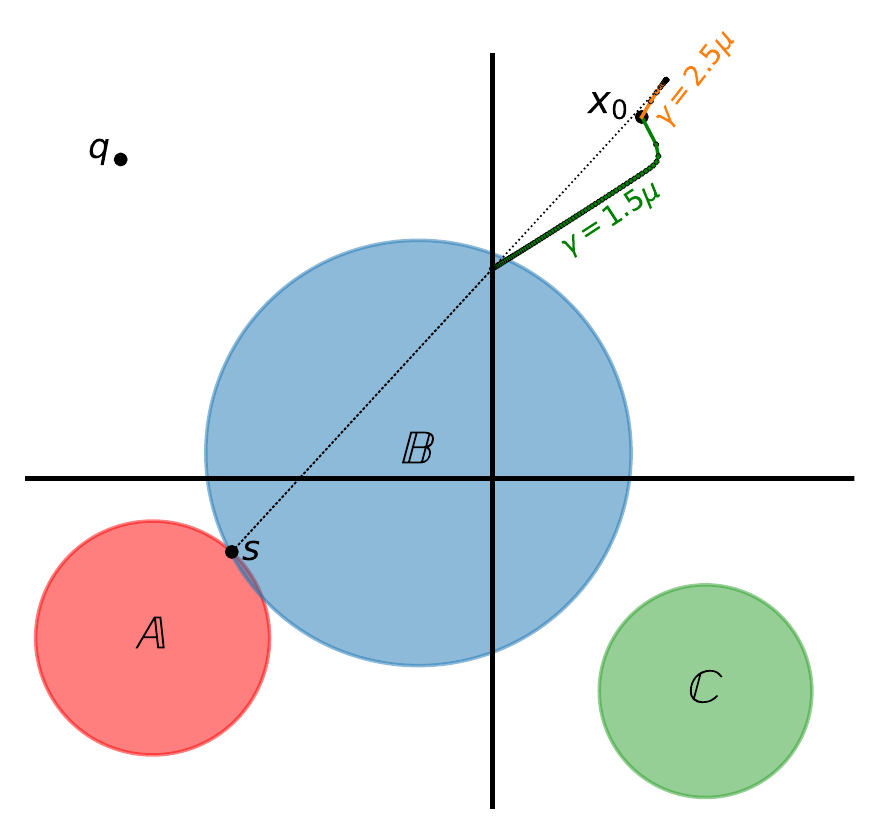}
\caption{Behavior of the iterates of the Davis--Yin (left) and the strengthened-Davis--Yin (right) splitting algorithms for the problem~\eqref{e:exstreng} for two stepsize parameters~$\gamma$ and $\lambda_k=0.99(2-\gamma/(2\mu))$. Since $\sigma_A=0$, the solution is obtained after projecting the fixed point onto the set $\mathbb{A}$.}\label{fig:40}
\end{figure}

In order to obtain the best combination of the stepsize and relaxation parameters, we run the algorithms for every possible value of $(\frac{\gamma}{\mu},\lambda)$  on a grid with 4950 points in $]0,4[\times]0,2[$. The algorithms were stopped when the norm of the difference between the shadow sequence $P_A(x_k)$ and the solution to the problem was smaller than $10^{-8}$. The solution, which is approximately equal to $(-1.227559, -0.3452923)$, was computed in Maple by numerically solving the KKT conditions with high precision. A contour plot representing the number of iterations is shown in Figure~\ref{fig:4}. The minimum number of iterations for Davis--Yin was 17 and it was attained at $(\frac{\gamma}{\mu},\lambda)=(3.11,0.43)$, and for the strengthened-Davis--Yin was 16 and it was reached at three pair of values of $\frac{\gamma}{\mu}$ and $\lambda$, namely $\frac{\gamma}{\mu}=2.34,\lambda\in\{0.79,0.81\}$ and $\frac{\gamma}{\mu}=2.39,\lambda=0.79$.

\begin{figure}[ht!]\centering
\includegraphics[width=.49\textwidth]{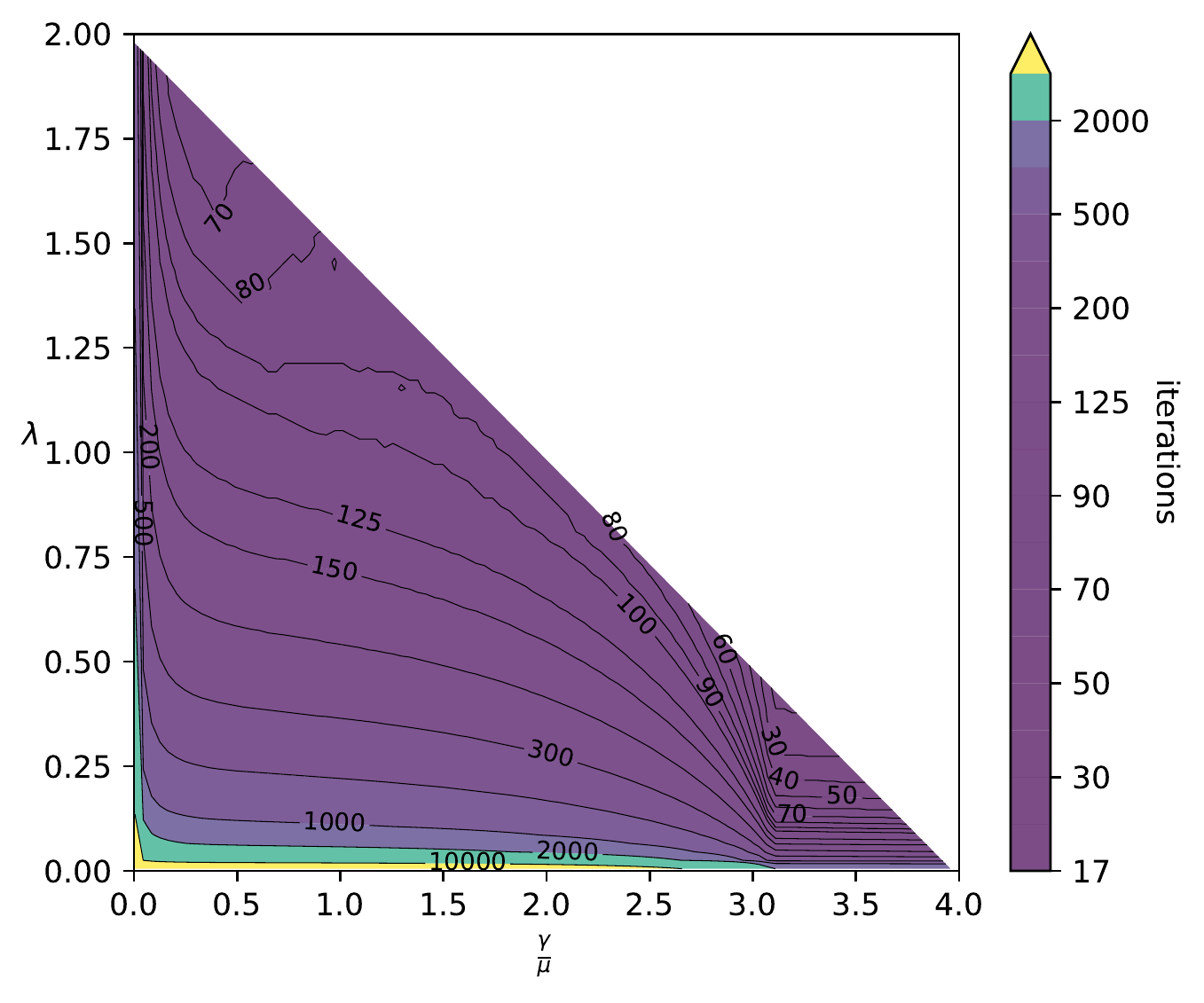}
\includegraphics[width=.49\textwidth]{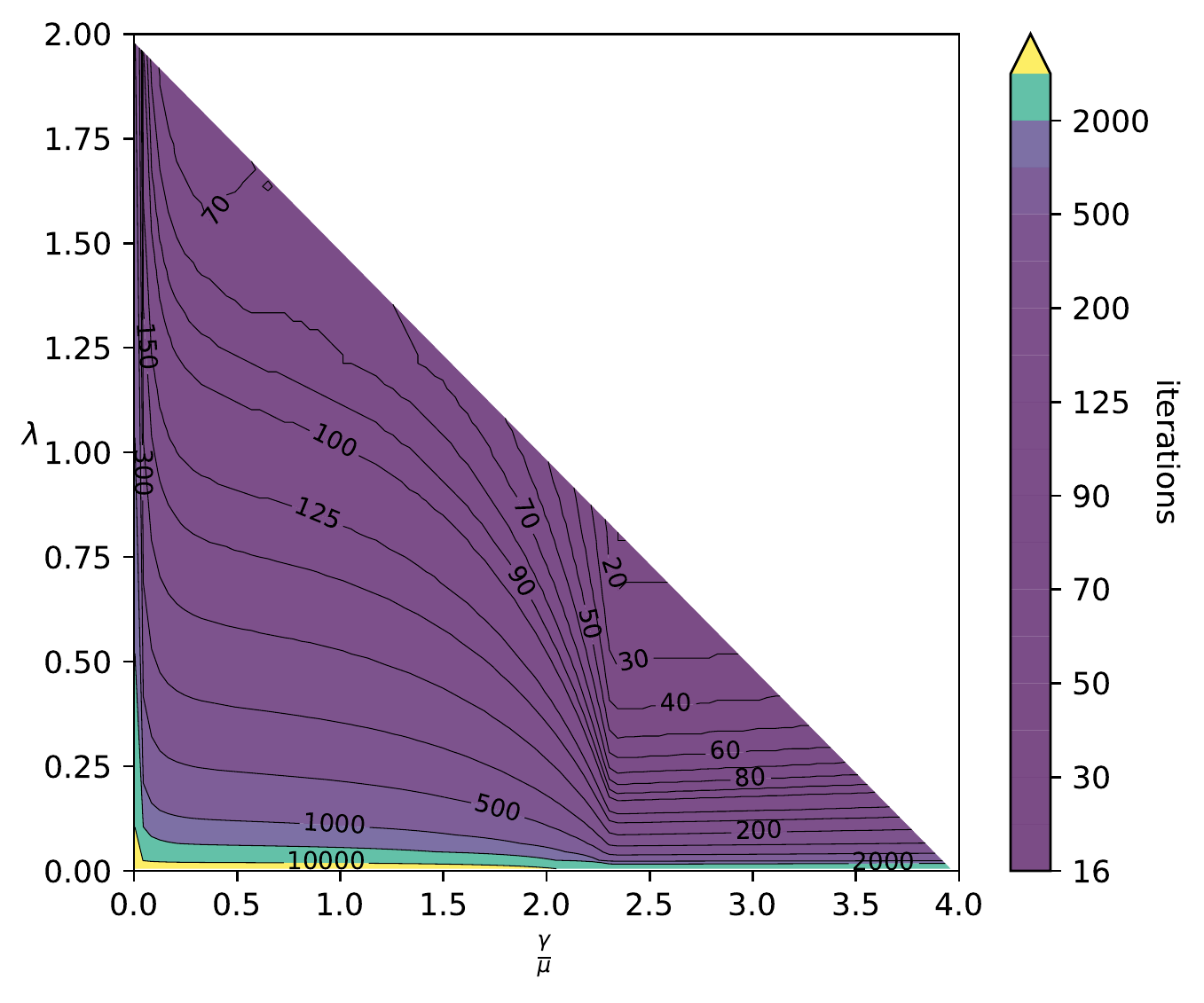}
\caption{Number of iterations needed until the shadow sequence is sufficiently close to the solution $s$  when the Davis--Yin (left) and the strengthened-Davis--Yin (right) splitting algorithms  are applied for different values of $\gamma$ and $\lambda_k=\lambda$, with the experiment setting shown in Figure~\ref{fig:40}.}\label{fig:4}
\end{figure}

\subsection{Image recovery via $\ell_1$ regularization}

The restoration of blurred images using $\ell_1$ regularization has become a standard application in the literature to test the performance of forward-backward algorithms, see~\cite{fista}. This consists in solving a minimization problem of the form
\begin{equation}
\label{e:exp1}
\argmin_{x \in \mathbb{R}^n} \; \mu \|x \|_1 + \frac{1}{2}\| Mx -b\|^2_2,
\end{equation}
where $M \in \mathbb{R}^{m \times n}$, $b \in \mathbb{R}^m$ is the observed blurred image (the vectorization of the two-dimensional matrix) and $\mu > 0$ is a regularization parameter. Setting $B= \partial\left(\mu \|\cdot\|_1\right)$ and $T=M^{T}(Mx-b)$, this problem can be reformulated as finding a zero of the sum $B+T$ of two maximally monotone operators. Since $T$ is Lipschitz continuous, we can employ the forward-backward algorithm (i.e., Davis--Yin with $A=0$), to solve~\eqref{e:exp1}. Note that the proximity operator of the $\ell_1$-norm is the well-known soft thresholding function from~Example~\ref{ex:prox}.
As pixel values must be in $[0,1]$, it is more realistic to solve instead the problem
\begin{equation*}
\label{e:exp2}
\argmin_{x \in [0,1]^n} \; \mu \|x \|_1 + \frac{1}{2}\| Mx -b\|^2_2,
\end{equation*}
Setting $A = N_{[0,1]^n}$ and $B$ and $T$ as above, this problem can be solved without much additional effort using the Davis--Yin splitting algorithm.

For our tests we replicated the wavelet-based restoration method in~\cite[Section~5.1.]{fista}, including the additional constraint $x\in[0,1]^n$. We also ran our experiments without this constraint (applying thus forward-backward) and the results were basically the same, so we do not include them for brevity. We employed as observed images the widely-used $256 \times 256$ pixels cameraman image and a picture of a symbol from the University of Alicante: the sculpture ``Dibuixar l'espai'' (by Pepe Azor\'in), with a resolution of $600\times 800$ pixels. The images, shown in Figure~\ref{fig:3}, were subjected to a Gaussian $9 \times 9$ blur with standard deviation 4, followed by an additive zero-mean Gaussian noise with standard deviation $10^{-3}$. We chose $M=RW$, where $R$ is the matrix representing the blur operator and $W$ is the inverse of the three stage Haar wavelet transform. The regularization parameter was taken as $\mu=2\cdot 10^{-5}$. The Lipschitz constant of $T$ is the spectral radius of $M^TM$, which is equal to $1$. Thus, $T$ is 1-cocoercive and the stepsize in the Davis--Yin algorithm can be chosen in the interval $]0,4[$. For values of $(\gamma,\lambda)$ on a grid with 4950 points in $]0,4[\times]0,2[$, we performed 200 iterations of the algorithm taking as initial image the observed blurred image. Figure~\ref{fig:5} shows the value of the objective function in the final iteration. We observe a symmetry with respect to the diagonal. The lowest values of the objective function were $0.349$ for the cameraman and $2.684$ for the sculpture, and they were both attained at $(\gamma,\lambda)=(1.98, 0.99)$.

\begin{figure}[htb!]\centering

\includegraphics[width=.33\textwidth]{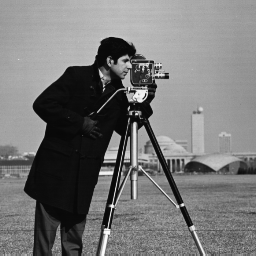}\hfill%
\includegraphics[width=.33\textwidth]{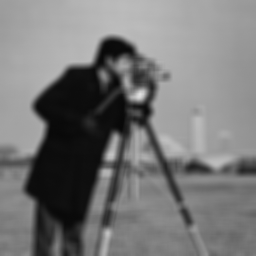}\hfill%
\includegraphics[width=.33\textwidth]{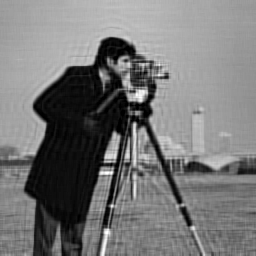}\\
\includegraphics[width=.33\textwidth]{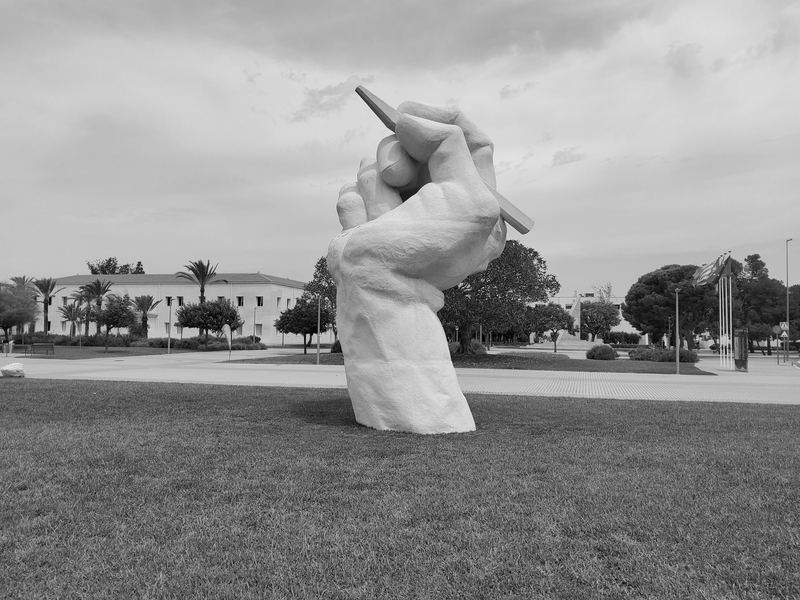}\hfill%
\includegraphics[width=.33\textwidth]{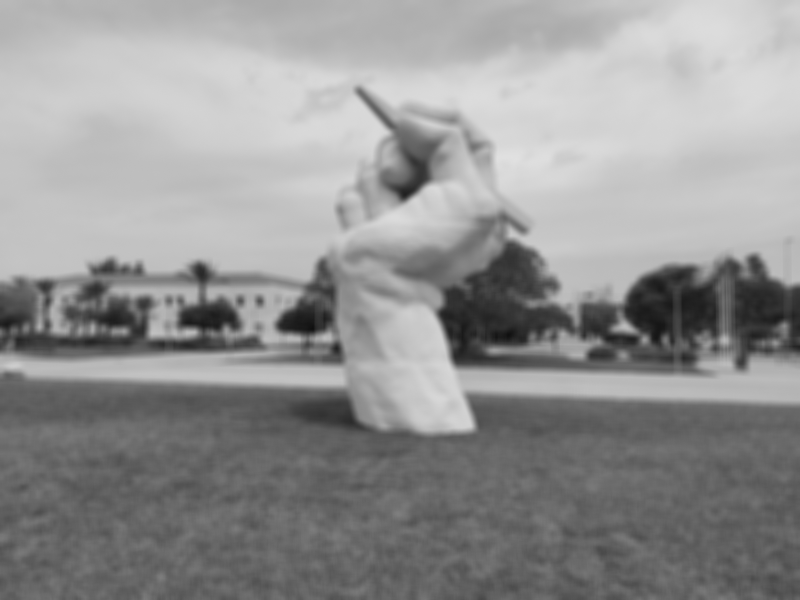}\hfill%
\includegraphics[width=.33\textwidth]{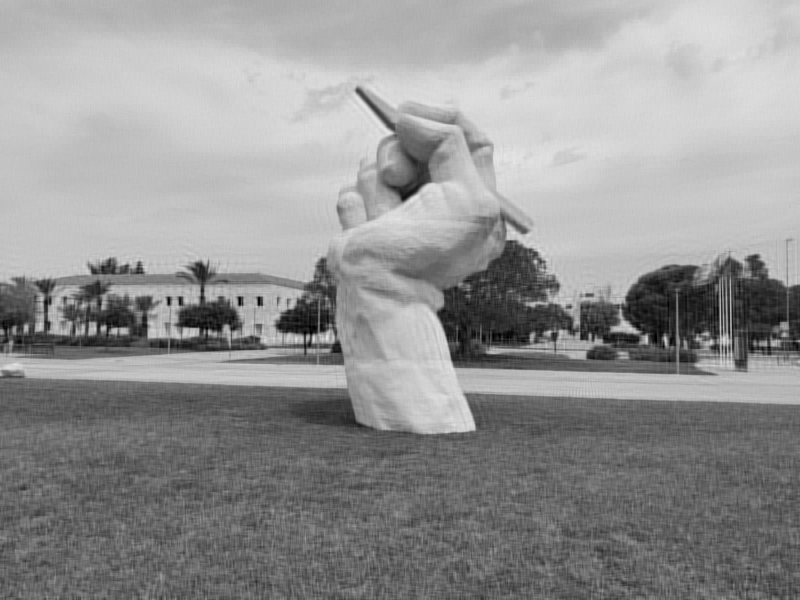}%
\caption{Original (left), observed blurred (middle) and restored (right) images, showing the cameraman at the top and the sculpture ``Dibuixar l'espai'' at the bottom. The Davis--Yin algorithm was applied for 200 iterations with $\gamma=1.98$ and $\lambda=0.99$, using as starting point the observed blurred image. }\label{fig:3}
\end{figure}

\begin{figure}[ht!]\centering
\includegraphics[width=.49\textwidth]{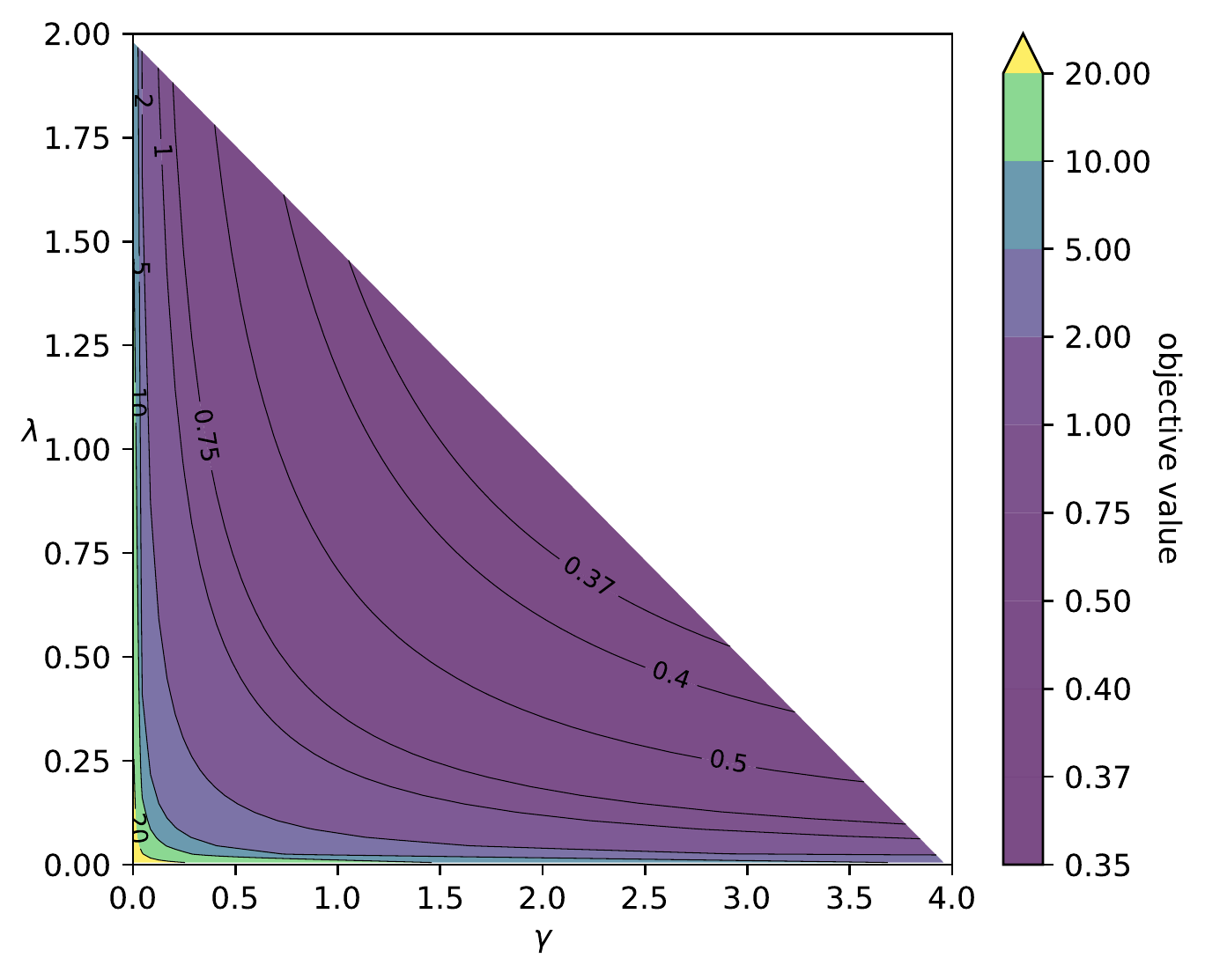}
\includegraphics[width=.49\textwidth]{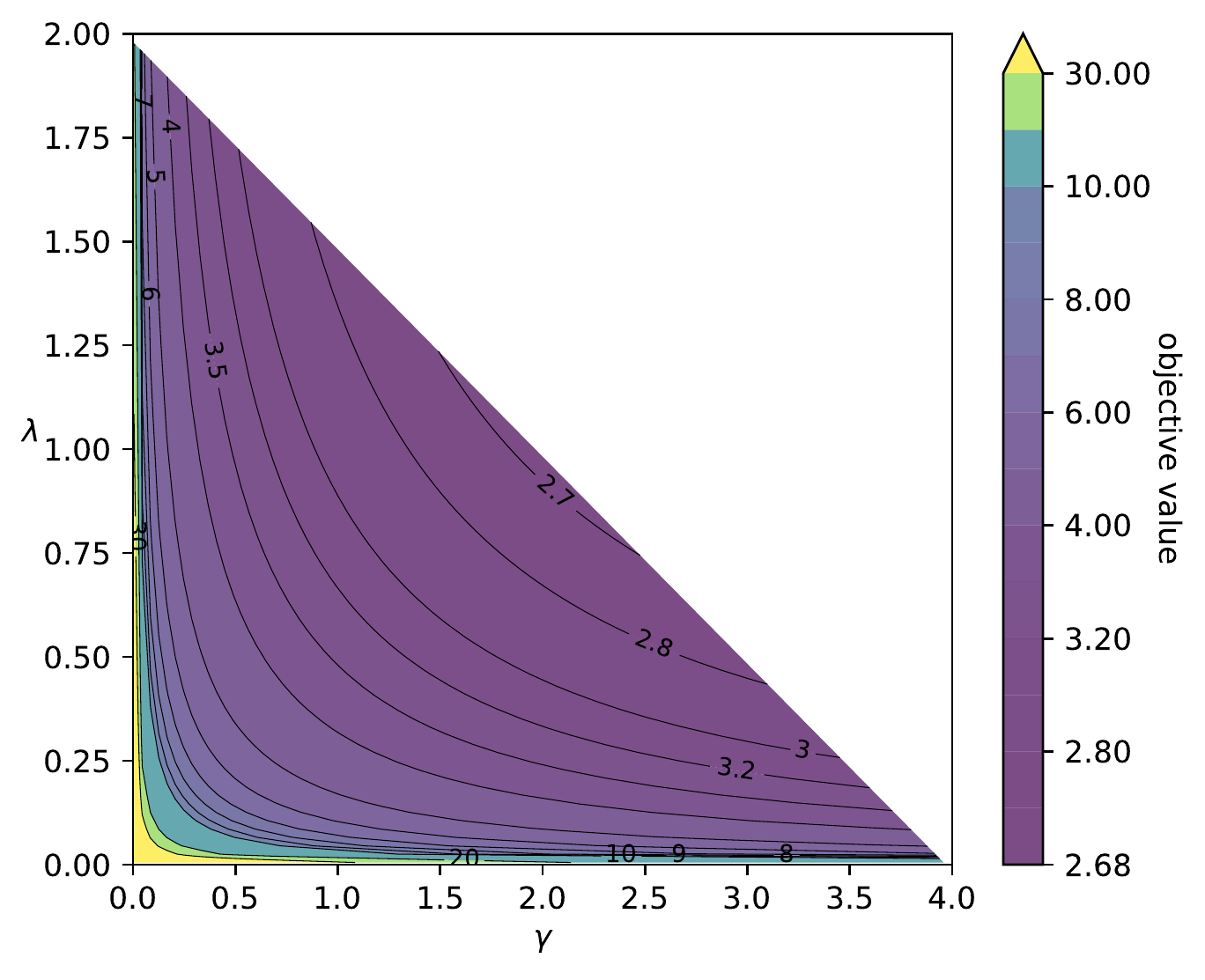}
\caption{Objective function value after 200 iterations of the forward-backward algorithm applied to the cameraman (left) and the sculpture ``Dibuixar l'espai'' (right), for different values of $\gamma$ and $\lambda$, and taking as  starting point the observed blurred image. }\label{fig:5}
\end{figure}

\section{Conclusions}\label{sec:5}
We have presented an alternative proof of convergence for the Davis--Yin splitting algorithm without requiring the Davis--Yin operator~\eqref{eq:DYoperator} to be averaged. The proof was solely based on monotone operator theory and has the additional advantage of allowing larger stepsizes, up to four times the cocoercivity constant of the single-valued operator, doubling thus the range of values allowed in~\cite{davis2017three}. As a consequence, the same conclusion applies to the forward-backward splitting algorithm. We have also derived a \emph{strengthened} version of the algorithm for computing the resolvent of the sum, based on the framework developed in~\cite{franstrengthening}. The numerical experiments included show the importance of appropriately  selecting the stepsize and relaxation parameters. In most of our tests, the behavior of the algorithm with respect to the parameters was symmetric, as the one shown in Figure~\ref{fig:5}. Selecting the best parameters is not a simple task, but even so, it is clear that having more freedom in the choice of the stepsize parameter can only be advantageous.

\paragraph{{\small Acknowledgements}}{\small
\hspace{-2mm}
The authors would like to thank Patrick Combettes for making us aware of~\cite{DaoPhan21} right before submitting this work. We thank two anonymous referees for their careful reading and their constructive comments which helped improve our manuscript.\smallskip

FJAA and DTB were partially supported by the Ministry of Science, Innovation and Universities of Spain and the European Regional Development Fund (ERDF) of the European Commission, Grant PGC2018-097960-B-C22. FJAA was partially supported by the Generalitat Valenciana (AICO/2021/165). DTB was supported by MINECO and European Social Fund (PRE2019-090751) under the program ``Ayudas para contratos predoctorales para la formaci\'{o}n de doctores''
2019.}

\end{document}